\title{Quasi-polynomial functions over bounded distributive lattices}
\author{Miguel Couceiro}
\address{Mathematics Research Unit, University of Luxembourg \\
162A, avenue de la Fa\"{\i}encerie, L-1511 Luxembourg, Luxembourg } \email{miguel.couceiro[at]uni.lu }
\author{Jean-Luc Marichal}
\address{Mathematics Research Unit, University of Luxembourg \\
162A, avenue de la Fa\"{\i}encerie, L-1511 Luxembourg, Luxembourg } \email{jean-luc.marichal[at]uni.lu }
\date{July 9, 2010}
\begin{document}

\theoremstyle{plain}
\newtheorem{theorem}{Theorem}
\newtheorem{lemma}[theorem]{Lemma}
\newtheorem{proposition}[theorem]{Proposition}
\newtheorem{corollary}[theorem]{Corollary}
\newtheorem{fact}[theorem]{Fact}
\newtheorem*{claim}{Claim}
\newtheorem*{main}{Main Theorem}

\theoremstyle{definition}
\newtheorem{definition}[theorem]{Definition}
\newtheorem{example}[theorem]{Example}

\theoremstyle{remark}
\newtheorem{remark}{Remark}

\newcommand{\N}{\mathbb{N}}                     
\newcommand{\R}{\mathbb{R}}                     
\newcommand{\co}[1]{\ensuremath{\overline{#1}}}
\newcommand{\vect}[1]{\ensuremath{\mathbf{#1}}} 
\def\med{\mathop{\rm med}\nolimits}

\begin{abstract}
In \cite{CouMar3} the authors introduced the notion of quasi-polynomial function as being a mapping $f\colon X^n\to X$ defined and valued on a
bounded chain $X$ and which can be factorized as $f(x_1,\ldots,x_n)=p(\varphi(x_1),\ldots,\varphi(x_n))$, where $p$ is a polynomial function
(i.e., a combination of variables and constants using the chain operations $\wedge$ and $\vee$) and $\varphi$ is an order-preserving map. In the
current paper we study this notion in the more general setting where the underlying domain and codomain sets are, possibly different, bounded
distributive lattices, and where the inner function is not necessarily order-preserving. These functions appear naturally within the scope of
decision making under uncertainty since, as shown in this paper, they subsume overall preference functionals associated with Sugeno integrals
whose variables are transformed by a given utility function. To axiomatize the class of quasi-polynomial functions, we propose several
generalizations of well-established properties in aggregation theory, as well as show that some of the characterizations given in \cite{CouMar3}
still hold in this general setting. Moreover, we investigate the so-called transformed polynomial functions (essentially, compositions of unary
mappings with polynomial functions) and show that, under certain conditions, they reduce to quasi-polynomial functions.
\end{abstract}

\keywords{Distributive lattice, polynomial function, quasi-polynomial function, functional equation, aggregation function, discrete Sugeno
integral, utility function}

\subjclass[2010]{Primary 28B15, 39B72; Secondary 06A07, 06D05}

\maketitle

\section{Introduction}

When we need to summarize, fuse, or merge a set of values into a single one, we usually make use of a so-called aggregation function, e.g., a
mean or an averaging function. The need to aggregate values in a meaningful way has become more and more present in an increasing number of
areas not only of mathematics or physics, but especially in applied fields such as engineering, computer science, and economical and social
sciences. Various aggregation functions have been proposed in the literature, thus giving rise to the growing theory of aggregation which
proposes, analyzes, and characterizes aggregation function classes. For recent references, see Beliakov et al.~\cite{BelPraCal07} and Grabisch
et al.~\cite{GraMarMesPap09}.

Among noteworthy aggregation functions is the (discrete) Sugeno integral, which was introduced by Sugeno~\cite{Sug74,Sug77} as a way to compute
the average of a function with respect to a nonadditive measure. Since its introduction, the Sugeno integral has been thoroughly investigated
and is now considered as one of the most relevant aggregation functions in the qualitative setting of ordinal information (e.g., when the values
to be aggregated are simply defined on a chain without further structure). For general background, see also the edited book \cite{GraMurSug00}.

As it was observed in \cite{Mar01}, Sugeno integrals can be regarded as certain (lattice) polynomial functions, that is, functions which can be
obtained as combinations of variables and constants using the lattice operations $\wedge$ and $\vee$. More precisely, given a bounded chain $X$,
the Sugeno integrals are exactly those polynomial functions $f\colon X^n\to X$ which are idempotent, that is, satisfying $f(x,\ldots,x)=x$. This
convenient description made it possible to naturally extend the definition of the Sugeno integrals to the case when $X$ is a bounded
distributive lattice (see \cite{Mar09}) and to derive several axiomatizations of this class (as idempotent polynomial functions); see
\cite{CouMar1,CouMar0}.

In many applications, the values to be aggregated are first to be transformed by an order-preserving unary function $\varphi\colon X\to Y$ so
that the transformed values (which are usually real numbers) can be aggregated in a meaningful way by a function $g\colon Y^n\to Y$. The
resulting composed function $f\colon X^n\to Y$ is then defined as $f=g\circ\varphi$, that is,
\begin{equation}\label{eq:OverPrefFun}
f(x_1,\ldots,x_n)=g(\varphi(x_1),\ldots,\varphi(x_n)).
\end{equation}
Such an aggregation model is used for instance in decision under uncertainty, where $\varphi$ is called a utility function and $f$ an overall
preference functional. It is also used in multi-criteria decision making where the criteria are commensurate (i.e., expressed in a common
scale). For a recent reference, see the edited book \cite{BouDubPraPir09}.

This aggregation model has also been investigated in a purely ordinal decision setting, where $X$ and $Y$ are bounded chains and $g\colon Y^n\to
Y$ is a Sugeno integral or a polynomial function; see for instance \cite{DubFarPraSab09,DubMarPraRouSab01}. In the special case when $X=Y$, the
corresponding compositions (\ref{eq:OverPrefFun}), which we call quasi-polynomial functions, were recently investigated and characterized by the
authors as solutions of certain functional equations and in terms of necessary and sufficient conditions which have natural interpretations in
decision making and aggregation theory; see \cite{CouMar3}.

In this paper, after recalling the basic concepts in lattice theory as well as few well-known results concerning polynomial functions (Sections
2 and 3), we investigate the quasi-polynomial functions when considered in the more general setting where the underlying domain and codomain
sets are, possibly different, bounded distributive lattices, and where the inner unary functions are not necessarily order-preserving.
Moreover, we show that some axiomatizations of the class of quasi-polynomial functions given in \cite{CouMar3} still hold in this more general
setting and, under certain assumptions, we propose further characterizations of this class by necessary and sufficient conditions given in terms
of generalizations of well-established properties in aggregation theory. These results also lead to new characterizations of the class of
polynomial functions (Section 4). Finally, we introduce the concept of transformed polynomial function and show that, under certain conditions,
this notion is subsumed by that of quasi-polynomial function (Section 5).

\section{Basic notions and terminology}

Throughout this paper, let $X$ be an arbitrary bounded distributive lattice with lattice operations $\wedge$ and $\vee$, and with least and
greatest elements $0_X$ and $1_X$, respectively, where the subscripts may be omitted when the underlying lattice is clear from the context. A
\emph{chain} is simply a lattice $X$ such that, for every $a,b\in X$, we have $a\leqslant b$ or $b\leqslant a$. A subset $S$ of a lattice $X$ is
said to be \emph{convex} if, for every $a,b\in S$ and every $c\in X$ such that $a\leqslant c\leqslant b$, we have $c\in S$. For any subset
$S\subseteq X$, we denote by $\co S$ the convex hull of $S$, that is, the smallest convex subset of $X$ containing $S$.  For any integer
$n\geqslant 1$, we set $[n]=\{1,\ldots,n\}$.

The Cartesian product $X^n$ can be as well regarded as a bounded distributive lattice by defining $\wedge$ and $\vee$ componentwise, i.e.,
\begin{eqnarray*}
(a_1,\ldots,a_n)\wedge (b_1,\ldots,b_n) &=& (a_1\wedge b_1, \ldots , a_n\wedge b_n),\\
(a_1,\ldots,a_n)\vee (b_1,\ldots,b_n) &=& (a_1\vee b_1, \ldots , a_n\vee b_n).
\end{eqnarray*}
We denote the elements of $X$ by lower case letters $a,b,c,\ldots$, and the elements of $X^n$ by bold face letters
$\vect{a},\vect{b},\vect{c},\ldots$. We also use $\vect{0}$ and $\vect{1}$ to denote the least element and the greatest element, respectively,
of $X^n$ and we denote by $\vect{e}$ any $n$-tuple whose components are either $0$ or $1$, regardless of the underlying lattice. For $k\in [n]$,
$c\in X$, and $\vect{x}\in X^n$, we use $\vect{x}^c_k$ to denote the $n$-tuple whose $i$th component is $c$, if $i=k$, and $x_i$, otherwise. We
also let $\vect{x}\wedge c=(x_1\wedge c,\ldots,x_n\wedge c)$ and $\vect{x}\vee c=(x_1\vee c,\ldots,x_n\vee c)$, and denote by $[\vect{x}]_c$
(resp.\ $[\vect{x}]^c$) the $n$-tuple whose $i$th component is $0$ (resp. $1$), if $x_i\leqslant c$ (resp.\ $x_i\geqslant c$), and $x_i$,
otherwise.

Let $Y$ be an arbitrary bounded distributive lattice, possibly different from $X$. A mapping $\varphi\colon X\to Y$ is said to be a
\emph{lattice homomorphism} if it preserves the lattice operations, i.e.,
$$
\varphi(a\wedge_X b)=\varphi(a)\wedge_Y \varphi(b)\quad \textrm{and} \quad \varphi(a\vee_X b)=\varphi(a)\vee_Y \varphi(b).
$$
With no danger of ambiguity, we omit the subscripts $X$ and $Y$. For further background on lattice theory, see, e.g.,
\cite{BurSan81,Grae03,Rud01}.

The \emph{range} of a function $f\colon X^n\rightarrow Y$ is defined by $\mathcal{R}_f=\{f(\vect{x}) : \vect{x}\in X^n\}$. The \emph{diagonal
section} of $f$ is the unary function $\delta_f\colon X\to Y$ defined by $\delta_f(x)=f(x,\ldots,x)$. A function $f\colon X^n\rightarrow Y$ is
said to be \emph{order-preserving} (resp.\ \emph{order-reversing}) if, for every $\vect{a}, \vect{b}\in X^n$ such that
$\vect{a}\leqslant\vect{b}$, we have $f(\vect{a})\leqslant f(\vect{b})$ (resp.\ $f(\vect{a})\geqslant f(\vect{b})$). By a \emph{monotone}
function we simply mean an order-preserving or order-reversing function.
 As a typical example, we have the ternary median function $\med \colon X^3\rightarrow X$ which is given by
$$
\med(x_1,x_2,x_3) = (x_1 \wedge x_2)\vee (x_2 \wedge x_3)\vee (x_3 \wedge x_1).
$$
For any integer $m\geqslant 1$, any vector $\vect{x}\in X^m$, and any function $f\colon X^{n}\rightarrow X$, we define
$\langle\vect{x}\rangle_f\in X^m$ as the $m$-tuple $\langle\vect{x}\rangle_f=\med(f(\vect{0}),\vect{x},f(\vect{1}))$, where the right-hand side
median is taken componentwise. We then clearly have
\begin{equation} \label{eq:xxx1}
\langle\vect{x}\rangle_f=\langle\vect{x}\rangle_{\delta_f}.
\end{equation}
For $\varphi\colon X\to Y$ and $\vect{x}\in X^n$, we set $\varphi(\vect{x})=(\varphi(x_1),\ldots,\varphi(x_n))$.

Given a function $f\colon X^n\to Y$, we define the function $\hat f\colon\{0,1\}^n\to Y$ as
\begin{equation} \label{eq:hat}
\hat f(\vect{e})=\bigvee_{\textstyle{e'_i\in\{0,1\}\atop i\in [n]\, :\, e_i=1}}\bigwedge_{\textstyle{e'_i\in\{0,1\}\atop i\in [n]\, :\, e_i=0}}
f(\vect{e}'),\qquad \vect{e}\in\{0,1\}^n.
\end{equation}
Note that, for each fixed $\vect{e}\in\{0,1\}^n$, from among the $2n$ operations $\vee$ and $\wedge$ only $n$ are considered in (\ref{eq:hat}),
for if $e_i=0$ (resp.\ $e_i=1$) then the corresponding $\vee_{e'_i\in\{0,1\}}$ (resp.\ $\wedge_{e'_i\in\{0,1\}}$) is ignored and only
$\wedge_{e'_i\in\{0,1\}}$ (resp.\ $\vee_{e'_i\in\{0,1\}}$) is considered. In particular, we have that
$$
\hat f(\vect{0})=\underset{\vect{e}'\in\{0,1\}^n}{\bigwedge} f(\vect{e}')\quad\mbox{and}\quad\hat
f(\vect{1})=\underset{\vect{e}'\in\{0,1\}^n}{\bigvee} f(\vect{e}').
$$
For instance, if $n=2$, we have
\begin{equation}\label{eq:f-hat-2}
\hat f(0,1) = \big(f(0,0)\wedge f(1,0)\big)\vee\big(f(0,1)\wedge f(1,1)\big).
\end{equation}
If $f$ is order-preserving then $\hat f=f|_{\{0,1\}^n}$. However, note that $\hat f$ is always order-preserving.

\begin{remark}\label{remark-hat}
Note that there are $(2n)!$ ways of rearranging the $\vee$ and $\wedge$ in expression (\ref{eq:hat}) and thus $(2n)!$ ways of constructing such
an order-preserving function from a given $f\colon X^n\to Y$, each of which producing a possibly different order-preserving function. For
instance,  let $f\colon \{0,1\}^2\to  \{0,1\}$ be the Boolean addition, i.e., addition modulo 2. Then, as defined in (\ref{eq:hat}), $\hat f$ is
the binary conjunction $\wedge$. On the other hand, if $\hat f$ had been defined as
\begin{equation} \label{eq:hatp}
\hat f(\vect{e})=\bigwedge_{\textstyle{e'_i\in\{0,1\}\atop i\in [n]\, :\, e_i=0}} \bigvee_{\textstyle{e'_i\in\{0,1\}\atop i\in [n]\, :\, e_i=1}}
f(\vect{e}'),\qquad \vect{e}\in\{0,1\}^n,
\end{equation}
then $\hat f$ would have been the binary disjunction $\vee$. To avoid such ambiguities, we only refer to $\hat f$ as given in (\ref{eq:hat}).
\end{remark}

\section{General background on polynomial functions}

In this subsection we recall some important results on polynomial functions that will be needed hereinafter. For further background, we refer
the reader to \cite{BurSan81,CouMar1,CouMar2,CouMar0,Goo67,Grae03,Rud01}.

A (\emph{lattice}) \emph{polynomial function} on a lattice $X$ is any map  $p\colon X^n\to X$ which can be obtained as a composition of the
lattice operations $\wedge$ and $\vee$, the projections $\vect{x}\mapsto x_i$, $i\in [n]$, and the constant functions $\vect{x}\mapsto c$, $c\in
X$.

\begin{fact}\label{nondecreasing}
Every polynomial function $p\colon X^n\to X$ is order-preserving and satisfies $\delta_p(c)=\langle c\rangle_p$ for every $c\in X$. In
particular $\delta_p(c)=c$ for every $c\in \mathcal{R}_p$.
\end{fact}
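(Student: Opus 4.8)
The plan is to split the statement into three parts — order-preservation of $p$, the identity $\delta_p(c)=\langle c\rangle_p$, and the special case $\delta_p(c)=c$ for $c\in\mathcal{R}_p$ — the last being an immediate consequence of the first two. First I would establish order-preservation by structural induction on the construction of $p$ from projections, constants, $\wedge$, and $\vee$. Projections and constant functions are order-preserving, and the induction step rests only on the elementary fact that in any lattice the binary operations $\wedge$ and $\vee$ are order-preserving in each argument: from $a\leqslant a'$ one gets $a\wedge b\leqslant a'\wedge b$ (since $a\wedge b$ is a lower bound of $\{a',b\}$) and $a\vee b\leqslant a'\vee b$ (since $a'\vee b$ is an upper bound of $\{a,b\}$); hence every composite of such maps is order-preserving, and in particular so is $\delta_p$.

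For the identity $\delta_p(c)=\langle c\rangle_p$, the key move is to pass to the diagonal. Writing $p$ as a term and identifying all of its variables shows that $\delta_p\colon X\to X$ is again a (unary) polynomial function, and plainly $\delta_p(0)=p(\vect 0)$ and $\delta_p(1)=p(\vect 1)$; hence $\langle c\rangle_p=\med(p(\vect 0),c,p(\vect 1))=\med(\delta_p(0),\delta_p(1),c)$. Since $\delta_p$ is order-preserving we have $\delta_p(0)\leqslant\delta_p(1)$, so the median collapses to the clamped forms $\delta_p(0)\vee(c\wedge\delta_p(1))=(\delta_p(0)\vee c)\wedge\delta_p(1)$. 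It therefore suffices to prove that every order-preserving unary polynomial function $q\colon X\to X$ satisfies $q(c)=q(0)\vee(c\wedge q(1))$ for all $c\in X$.

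This I would prove by a second structural induction on $q$: the cases where $q$ is the identity map or a constant map are immediate, while for $q=q_1\vee q_2$ and $q=q_1\wedge q_2$ one applies the induction hypothesis to $q_1,q_2$ and collects terms using the distributivity of $X$ — via $(c\wedge q_1(1))\vee(c\wedge q_2(1))=c\wedge(q_1(1)\vee q_2(1))$ in the join case and $(q_1(0)\vee c)\wedge(q_2(0)\vee c)=(q_1(0)\wedge q_2(0))\vee c$ in the meet case, together with $q(0)=q_1(0)\vee q_2(0)$, $q(1)=q_1(1)\vee q_2(1)$ (and dually). This is the only place where distributivity is genuinely used, and essentially the only computation in the argument; I expect the only mild nuisance to be keeping track of which of the two equivalent clamped normal forms is most convenient in a given case, but nothing delicate is involved.

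Finally, for the special case: if $c\in\mathcal{R}_p$, say $c=p(\vect a)$, then $\vect 0\leqslant\vect a\leqslant\vect 1$ and order-preservation give $p(\vect 0)\leqslant c\leqslant p(\vect 1)$, so $\langle c\rangle_p=\med(p(\vect 0),c,p(\vect 1))=c$, and combining this with the identity just proved yields $\delta_p(c)=c$.
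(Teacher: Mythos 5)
Your proposal is correct. Note, however, that the paper offers no proof of this statement: it is recorded as a ``Fact'' belonging to the standard background on lattice polynomial functions (Goodstein et al.), so there is no in-paper argument to compare against line by line. Your structural induction for order-preservation is the standard one, and your reduction of $\delta_p(c)=\langle c\rangle_p$ to the unary identity $q(c)=q(0)\vee\bigl(c\wedge q(1)\bigr)$ for order-preserving unary polynomial functions, proved by a second induction using distributivity, is sound; the final step for $c\in\mathcal{R}_p$ is immediate, as you say. The only shortcut you miss is that within the paper this unary identity drops out at once from the disjunctive normal form of Proposition 3: writing $\delta_p(c)=\bigvee_{I\subseteq[n]}\bigl(\alpha_p(I)\wedge\bigwedge_{i\in I}c\bigr)$ and collecting the nonempty $I$ by distributivity gives $p(\vect 0)\vee\bigl(c\wedge p(\vect 1)\bigr)=\langle c\rangle_p$ directly, so your second induction essentially reproves the unary case of the DNF representation; this is perfectly fine, just slightly longer and more self-contained. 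Two small bookkeeping points: the passage between the two clamped forms $a\vee(c\wedge b)=(a\vee c)\wedge b$ for $a\leqslant b$ is itself the modular law (hence also relies on distributivity, not only the step you single out), and in the meet case of your induction the correct identities are $q(0)=q_1(0)\wedge q_2(0)$, $q(1)=q_1(1)\wedge q_2(1)$; both points are covered by your ``and dually'' and cause no gap since $X$ is assumed distributive.
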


Polynomial functions $p\colon X^n\to X$ satisfying $\mathcal{R}_p=X$ are referred to as (\emph{discrete}) \emph{Sugeno integrals}. By
Fact~\ref{nondecreasing}, Sugeno integrals are exactly those polynomial functions $q\colon X^n\to X$ satisfying $q(\vect{0})=0$ and
$q(\vect{1})=1$, and thus every polynomial function $p\colon X^n\to X$ is of the form  $p=\langle q\rangle_p$ for a suitable Sugeno integral
$q\colon X^n\to X$ (see \cite{Mar09}). The following result is due to Goodstein~\cite{Goo67}.

\begin{proposition}\label{Goodstein}
Every polynomial function is completely determined by its restriction to $\{0,1\}^n$. Moreover, a function $g\colon \{0,1\}^n\rightarrow X$ can
be extended to a polynomial function $f\colon X^{n}\rightarrow X$ if and only if it is order-preserving, and in this case the extension is
unique.
\end{proposition}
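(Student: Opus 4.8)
The plan is to pass through the \emph{disjunctive normal form} (DNF) of polynomial functions. First I would establish, by structural induction on the term defining $p$, that every polynomial function $p\colon X^n\to X$ can be written as
\[
p(\vect{x}) \;=\; \bigvee_{S\subseteq [n]}\Big(a_S\wedge\bigwedge_{i\in S}x_i\Big)
\]
for suitable coefficients $a_S\in X$, with the convention $\bigwedge_{i\in\emptyset}x_i=1_X$. The base cases are immediate: a constant function $\vect{x}\mapsto c$ corresponds to $a_\emptyset=c$ and $a_S=0$ otherwise, and the projection $\vect{x}\mapsto x_k$ to $a_{\{k\}}=1$ and $a_S=0$ otherwise. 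For the inductive step, the join of two functions given in DNF is again in DNF, while their meet can be brought into DNF by repeatedly distributing $\wedge$ over $\vee$ and then grouping the resulting conjunctions according to the set of variables occurring in them, which recombines the original coefficients into new ones. This DNF step, with its bookkeeping of coefficients, is the main technical point.

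Next, for $\vect{e}\in\{0,1\}^n$ put $S_{\vect{e}}=\{i\in[n]:e_i=1\}$, so that $\vect{e}$ is the characteristic vector of $S_{\vect{e}}$. Evaluating the DNF at $\vect{e}$: if $S\not\subseteq S_{\vect{e}}$ then $S$ contains an index $i$ with $e_i=0$, so the factor $\bigwedge_{i\in S}x_i$ takes value $0$ at $\vect{e}$ and that term vanishes; if $S\subseteq S_{\vect{e}}$ that factor takes value $1_X$. Hence $p(\vect{e})=\bigvee_{S\subseteq S_{\vect{e}}}a_S$. Substituting this back and using the elementary identity $\bigvee_{T\supseteq S}\bigwedge_{i\in T}x_i=\bigwedge_{i\in S}x_i$ (the largest term being $T=S$), one obtains
\[
p(\vect{x})\;=\;\bigvee_{\vect{e}\in\{0,1\}^n}\Big(p(\vect{e})\wedge\bigwedge_{i\in S_{\vect{e}}}x_i\Big),
\]
which exhibits $p$ as a function of its restriction $p|_{\{0,1\}^n}$ alone; in particular the polynomial extension of a given $g\colon\{0,1\}^n\to X$, when it exists, is unique.

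Finally I would dispatch the ``moreover'' part. Necessity follows at once from Fact~\ref{nondecreasing}: a polynomial extension of $g$ is order-preserving, hence so is its restriction $g$ to $\{0,1\}^n$. For sufficiency, given an order-preserving $g\colon\{0,1\}^n\to X$, define
\[
f(\vect{x})\;=\;\bigvee_{\vect{e}\in\{0,1\}^n}\Big(g(\vect{e})\wedge\bigwedge_{i\in S_{\vect{e}}}x_i\Big),
\]
which is manifestly a polynomial function. Evaluating at an arbitrary $\vect{e}'\in\{0,1\}^n$ gives $f(\vect{e}')=\bigvee_{\vect{e}\leqslant\vect{e}'}g(\vect{e})$ by the same computation as above; since $g$ is order-preserving, each term is $\leqslant g(\vect{e}')$ while the term $\vect{e}=\vect{e}'$ equals $g(\vect{e}')$, whence $f(\vect{e}')=g(\vect{e}')$. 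Thus $f|_{\{0,1\}^n}=g$, completing the proof. I expect the only genuinely delicate step to be the DNF induction; once it is in place, every remaining assertion reduces to a direct evaluation.
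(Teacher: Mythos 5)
Your proof is correct, and it follows what is essentially the paper's (i.e., Goodstein's) approach: the paper states Proposition~\ref{Goodstein} without proof, citing Goodstein, and the standard argument is exactly your route through disjunctive normal form obtained by structural induction, followed by evaluation at the tuples in $\{0,1\}^n$ to rewrite the coefficients as values $p(\vect{e})$, which gives uniqueness, while order-preservation of the restriction (via Fact~\ref{nondecreasing}) and the explicit join-of-meets construction settle the two directions of the extension criterion. As a bonus, your substitution step simultaneously establishes the DNF representation (\ref{eq:DNF45}) with $\alpha_f(I)=f(\vect{e}_I)$, which the paper records as the next proposition.
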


As observed by Goodstein~\cite{Goo67}, polynomial functions on bounded distributive lattices have neat normal form representations. To this
extent, for each $I\subseteq [n]$, let $\vect{e}_I$ be the element of $X^n$ whose $i$th component is $1$, if $i\in I$, and $0$, otherwise. Let
$\alpha_f\colon 2^{[n]}\to X$ and $\beta_f\colon 2^{[n]}\to X$ be the functions defined by $\alpha_f(I)=f(\vect{e}_I)$ and
$\beta_f(I)=f(\vect{e}_{[n]\setminus I}),$ respectively.

\begin{proposition}
A function $f\colon X^{n}\rightarrow X$ is a polynomial function if and only if
\begin{equation}\label{eq:DNF45}
f(\vect{x})=\bigvee_{I\subseteq [n]}\Big(\alpha_f(I)\wedge \bigwedge_{i\in I} x_i\Big) \quad \mbox{or} \quad f(\vect{x})=\bigwedge_{I\subseteq
[n]}\Big(\beta_f(I)\vee \bigvee_{i\in I} x_i\Big).
\end{equation}
\end{proposition}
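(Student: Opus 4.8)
The plan is to prove both normal form representations by reducing to Goodstein's result (Proposition~\ref{Goodstein}), exploiting distributivity to move the lattice operations past variables. I would begin with the easy direction: any function of one of the two forms in \eqref{eq:DNF45} is manifestly a composition of $\wedge$, $\vee$, projections and constants, hence a polynomial function. So the substance is the converse.

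For the converse, suppose $f\colon X^n\to X$ is a polynomial function. By Proposition~\ref{Goodstein}, $f$ is completely determined by its restriction $g=f|_{\{0,1\}^n}$, which is order-preserving, and conversely that restriction determines $f$ uniquely. The idea is to show that the right-hand side of the first equation in \eqref{eq:DNF45}, call it $p(\vect{x})=\bigvee_{I\subseteq[n]}\bigl(\alpha_f(I)\wedge\bigwedge_{i\in I}x_i\bigr)$, is itself a polynomial function (clear) which agrees with $f$ on $\{0,1\}^n$; uniqueness in Proposition~\ref{Goodstein} then forces $p=f$ everywhere. To check agreement on $\{0,1\}^n$: fix $\vect{e}_J\in\{0,1\}^n$ and evaluate $p(\vect{e}_J)$. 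For a term indexed by $I$, the product $\bigwedge_{i\in I}(\vect{e}_J)_i$ equals $1$ if $I\subseteq J$ and $0$ otherwise, so $p(\vect{e}_J)=\bigvee_{I\subseteq J}\alpha_f(I)=\bigvee_{I\subseteq J}f(\vect{e}_I)$. Since $f$ is order-preserving and $\vect{e}_I\leqslant\vect{e}_J$ for $I\subseteq J$, this supremum equals $f(\vect{e}_J)=\alpha_f(J)$. Hence $p$ and $f$ coincide on $\{0,1\}^n$, and we are done for the first representation. The second (conjunctive) representation is handled dually: set $q(\vect{x})=\bigwedge_{I\subseteq[n]}\bigl(\beta_f(I)\vee\bigvee_{i\in I}x_i\bigr)$, evaluate at $\vect{e}_{[n]\setminus J}$, observe $\bigvee_{i\in I}(\vect{e}_{[n]\setminus J})_i=0$ iff $I\subseteq J$, so $q(\vect{e}_{[n]\setminus J})=\bigwedge_{I\subseteq J}\beta_f(I)=\bigwedge_{I\subseteq J}f(\vect{e}_{[n]\setminus I})$, and use that $f$ is order-preserving together with $\vect{e}_{[n]\setminus J}\leqslant\vect{e}_{[n]\setminus I}$ for $I\subseteq J$ to see this equals $f(\vect{e}_{[n]\setminus J})=\beta_f(J)$.

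The only place where genuine care is needed — and what I expect to be the main obstacle — is making the evaluation argument watertight: one must know that in evaluating $\bigvee_{I\subseteq J}f(\vect{e}_I)$ the largest term $f(\vect{e}_J)$ already dominates all the others, which is exactly where monotonicity of polynomial functions (Fact~\ref{nondecreasing}) enters, and one must be sure that this comparison does not secretly require distributivity or any hypothesis beyond what is available. Distributivity itself is not actually used in the verification on $\{0,1\}^n$; it is implicitly used earlier, in Goodstein's theorem, to guarantee the unique extension. An alternative, more self-contained route avoids invoking uniqueness: prove \eqref{eq:DNF45} directly by induction on the structure of the polynomial function $f$, using distributivity to push $\vee$ over $\wedge$ (and dually) so that any composition of $\wedge$, $\vee$, projections and constants can be rewritten in the claimed disjunctive (resp.\ conjunctive) normal form; here the base cases (projections, constants) are immediate and the inductive step for $f\wedge g$ and $f\vee g$ uses the distributive laws to recombine two normal forms into one, with the coefficients $\alpha$ and $\beta$ transforming by $\vee$/$\wedge$ accordingly. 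I would present the first, shorter argument via Proposition~\ref{Goodstein}, since it is cleaner and the normal forms of $p$ and $q$ are visibly polynomial functions.
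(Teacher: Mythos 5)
Your argument is correct. Note, however, that the paper itself offers no proof of this proposition: it is quoted as a known result of Goodstein, placed right after Proposition~\ref{Goodstein}, so there is no internal proof to compare against. Your verification is sound: both normal forms are visibly polynomial functions; evaluating the DNF at $\vect{e}_J$ kills every term with $I\not\subseteq J$ (the meet becomes $0$) and leaves $\bigvee_{I\subseteq J}\alpha_f(I)=f(\vect{e}_J)$ by order-preservation (Fact~\ref{nondecreasing}), and dually for the CNF at $\vect{e}_{[n]\setminus J}$; then the uniqueness/complete-determination part of Proposition~\ref{Goodstein} forces equality everywhere. The one caveat worth flagging is logical ordering rather than correctness: in Goodstein's original development (and in most treatments) the unique-determination-by-$\{0,1\}^n$ statement is itself obtained \emph{from} the normal-form representation (the DNF coefficients are exactly the values on $\{0,1\}^n$), so deriving the normal forms from Proposition~\ref{Goodstein} is fine within this paper's axiomatic presentation but would be circular as a from-scratch proof. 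Your alternative route, structural induction on the polynomial expression using distributivity to merge two normal forms under $\wedge$ and $\vee$ (base cases being projections and constants), is the genuinely self-contained argument and is where distributivity visibly does its work; if you want a proof independent of how Proposition~\ref{Goodstein} was established, that is the one to write out. Minor housekeeping: state the convention $\bigwedge_{i\in\varnothing}x_i=1$ and $\bigvee_{i\in\varnothing}x_i=0$ so the $I=\varnothing$ terms read correctly, and observe that proving both representations is more than the stated disjunction requires, which is harmless.
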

The expressions given in (\ref{eq:DNF45}) are usually referred to as the \emph{disjunctive normal form} (DNF) representation and the
\emph{conjunctive normal form} (CNF) representation, respectively, of the polynomial function $f$.

In the sequel, we will make use of some characterizations  of polynomial functions obtained in \cite{CouMar1,CouMar0}. For the sake of
self-containment, we recall these results.

Let $S\subseteq X$. A function $f\colon X^{n}\rightarrow X$ is said to be
\begin{itemize}
\item \emph{$S$-idempotent} if  $\delta_f(c)=c$, for every $c\in S$.

\item \emph{$\wedge_S$-homogeneous} if $f(\vect{x}\wedge c) = f(\vect{x})\wedge c$ for all $\vect{x}\in X^n$ and $c\in S$.

\item \emph{$\vee_S$-homogeneous} if $f(\vect{x}\vee c) = f(\vect{x})\vee c$ for all $\vect{x}\in X^n$ and $c\in S$.

\item \emph{median decomposable} if $f(\vect{x})=\med\big(f(\vect{x}^{0}_{k}), x_k, f(\vect{x}^{1}_{k})\big)$ for all $\vect{x}\in X^n$ and
$k\in [n]$.
\end{itemize}

\begin{theorem} \label{mainChar}
Let $f\colon X^{n}\rightarrow X$ be a function. The following conditions are equivalent:
\begin{enumerate}
\item[(i)] $f$ is a polynomial function.

\item[(ii)] $f$ is median decomposable.

\item[(iii)] $f$ is order-preserving, and $\wedge_{\overline{\mathcal{R}}_f}$- and $\vee_{\overline{\mathcal{R}}_f}$-homogeneous.
\end{enumerate}
\end{theorem}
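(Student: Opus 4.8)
The plan is to establish the cycle of implications $(i)\Rightarrow(iii)\Rightarrow(ii)\Rightarrow(i)$, which matches the natural flow of the available tools. The implication $(ii)\Rightarrow(i)$ is the easy direction and should be handled first as a warm-up: assuming $f$ is median decomposable, I would induct on $n$. For $n=1$, writing $x=x_1$ and applying the decomposition gives $f(x)=\med(f(0),x,f(1))$, which is visibly a polynomial function. For the inductive step, fix $k=n$; by median decomposability $f(\vect{x})=\med\bigl(f(\vect{x}^0_n),x_n,f(\vect{x}^1_n)\bigr)$, and each of $f(\vect{x}^0_n)$ and $f(\vect{x}^1_n)$, viewed as a function of the remaining $n-1$ variables, is again median decomposable, hence a polynomial function by the induction hypothesis; since $\med$ is itself a polynomial operation and polynomial functions are closed under composition, $f$ is a polynomial function.

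For $(i)\Rightarrow(iii)$: order-preservation is Fact \ref{nondecreasing}. For homogeneity, I would use the DNF representation from (\ref{eq:DNF45}), $f(\vect{x})=\bigvee_{I\subseteq[n]}\bigl(\alpha_f(I)\wedge\bigwedge_{i\in I}x_i\bigr)$. Distributivity lets one push a meet with $c$ through the outer join and the inner meets, yielding $f(\vect{x})\wedge c=\bigvee_I\bigl(\alpha_f(I)\wedge c\wedge\bigwedge_{i\in I}x_i\wedge c\bigr)$; this equals $f(\vect{x}\wedge c)$ once one checks that $\alpha_f(I)\wedge c=f(\vect{e}_I)\wedge c$ may be replaced, inside the expression, by $\alpha_f(I)$ whenever $c\in\overline{\mathcal{R}}_f$. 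The point is that for $c$ in the convex hull of the range, $f(\vect{0})\leqslant c$ or the relevant comparisons let the extra $\wedge c$ factors be absorbed — more precisely, using $\langle\vect{x}\rangle_f$ and Fact \ref{nondecreasing} one reduces to the case where $f(\vect{0})=0$, $f(\vect{1})=1$, and then $\alpha_f(I)\wedge c$ is handled by the monotonicity $\alpha_f(\emptyset)=f(\vect{0})\leqslant c\leqslant f(\vect{1})=\alpha_f([n])$. The dual argument with the CNF gives $\vee_{\overline{\mathcal{R}}_f}$-homogeneity.

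The main obstacle is $(iii)\Rightarrow(ii)$. Here I would fix $\vect{x}\in X^n$ and $k\in[n]$ and show $f(\vect{x})=\med\bigl(f(\vect{x}^0_k),x_k,f(\vect{x}^1_k)\bigr)$. First replace $\vect{x}$ by $\langle\vect{x}\rangle_f$; by (\ref{eq:xxx1}) and order-preservation one checks that both sides are unchanged under $\vect{x}\mapsto\med(f(\vect{0}),\vect{x},f(\vect{1}))$, so we may assume every component of $\vect{x}$ lies in $[f(\vect{0}),f(\vect{1})]\subseteq\overline{\mathcal{R}}_f$; in particular $x_k\in\overline{\mathcal{R}}_f$. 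Now expand the right-hand median as $\bigl(f(\vect{x}^0_k)\wedge x_k\bigr)\vee\bigl(x_k\wedge f(\vect{x}^1_k)\bigr)\vee\bigl(f(\vect{x}^0_k)\wedge f(\vect{x}^1_k)\bigr)$, and apply $\wedge_{\overline{\mathcal{R}}_f}$- and $\vee_{\overline{\mathcal{R}}_f}$-homogeneity with $c=x_k$: $f(\vect{x}^0_k)\wedge x_k=f(\vect{x}^0_k\wedge x_k)\wedge x_k$ and similarly $x_k\vee f(\vect{x}^1_k)=f(\vect{x}^1_k\vee x_k)$. Using that $\vect{x}^0_k\wedge x_k$ and $\vect{x}^1_k\vee x_k$ differ from $\vect{x}$ only in coordinates where $x_k$ forces the value, together with order-preservation, one sandwiches $f(\vect{x})$ between the two homogeneity-transformed terms and identifies it with the median. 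The delicate point, and where I expect to spend the most care, is verifying that the constant $c=x_k$ indeed lies in $\overline{\mathcal{R}}_f$ after the reduction and that the homogeneity identities combine to pinch $f(\vect{x})$ exactly, rather than merely bounding it on one side; this is precisely where distributivity and the choice of $\langle\vect{x}\rangle_f$ do the essential work.
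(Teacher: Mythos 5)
You should first note that the paper itself gives no proof of Theorem~\ref{mainChar}: it is recalled, stated without proof, from \cite{CouMar1,CouMar0}, so there is no in-paper argument to compare yours against, and your proposal has to stand on its own. Most of it does: the implication $(ii)\Rightarrow(i)$ by induction on $n$ is fine (restricting a median decomposable function by fixing $x_n=0$ or $x_n=1$ does preserve median decomposability, and $\med$ is itself a polynomial function); $(i)\Rightarrow(iii)$ via the DNF/CNF is also essentially correct, since after distributing $\wedge c$ the only discrepant term is $I=\emptyset$, and $\alpha_f(\emptyset)=f(\vect{0})\leqslant c$ holds for every $c\in\overline{\mathcal{R}}_f=[f(\vect{0}),f(\vect{1})]$ (the detour through ``reduce to $f(\vect{0})=0$, $f(\vect{1})=1$'' is unnecessary); and the reduction in $(iii)\Rightarrow(ii)$ to tuples with entries in $[f(\vect{0}),f(\vect{1})]$ via $\vect{x}\mapsto\langle\vect{x}\rangle_f$ is legitimate, although it needs the (true, but unproved by you) checks $f(\vect{x})=f(\langle\vect{x}\rangle_f)$, $f(\vect{x}^0_k)=f\bigl((\langle\vect{x}\rangle_f)^0_k\bigr)$, $f(\vect{x}^1_k)=f\bigl((\langle\vect{x}\rangle_f)^1_k\bigr)$, and invariance of the median under $x_k\mapsto\langle x_k\rangle_f$, all obtainable from homogeneity at $c=f(\vect{0}),f(\vect{1})$ and order-preservation.

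The genuine gap is the final pinching step of $(iii)\Rightarrow(ii)$, precisely where you flag uncertainty. The identities you actually write down, $f(\vect{x}^0_k)\wedge x_k=f(\vect{x}^0_k\wedge x_k)$ and $x_k\vee f(\vect{x}^1_k)=f(\vect{x}^1_k\vee x_k)$, pair $x_k$ with the wrong arguments: since $\vect{x}^0_k\wedge x_k\leqslant\vect{x}\leqslant\vect{x}^1_k\vee x_k$, they only give $f(\vect{x}^0_k)\wedge x_k\leqslant f(\vect{x})\leqslant f(\vect{x}^1_k)\vee x_k$, bounds already implied by order-preservation and satisfied by the median as well, so they cannot identify $f(\vect{x})$ with $\med\bigl(f(\vect{x}^0_k),x_k,f(\vect{x}^1_k)\bigr)$. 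The pinch needs the opposite pairing, resting on the observations $\vect{x}^1_k\wedge x_k=\vect{x}\wedge x_k$ and $\vect{x}^0_k\vee x_k=\vect{x}\vee x_k$: with $x_k\in\overline{\mathcal{R}}_f$ after your reduction, homogeneity gives $f(\vect{x}^1_k)\wedge x_k=f(\vect{x}\wedge x_k)\leqslant f(\vect{x})$ and $f(\vect{x}^0_k)\vee x_k=f(\vect{x}\vee x_k)\geqslant f(\vect{x})$; then, using $f(\vect{x}^0_k)\leqslant f(\vect{x}^1_k)$, write the median once as $f(\vect{x}^0_k)\vee\bigl(x_k\wedge f(\vect{x}^1_k)\bigr)=f(\vect{x}^0_k)\vee f(\vect{x}\wedge x_k)\leqslant f(\vect{x})$ and once as $\bigl(f(\vect{x}^0_k)\vee x_k\bigr)\wedge f(\vect{x}^1_k)=f(\vect{x}\vee x_k)\wedge f(\vect{x}^1_k)\geqslant f(\vect{x})$, which closes the argument. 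So your overall architecture is workable, but the decisive computation is missing as written, and the manipulations you do supply would not complete it.
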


In the case when $X$ is a chain, Theorem~\ref{mainChar} can be drastically refined for the conditions provided only need to be verified on
$n$-tuples of a certain prescribed type (see \cite{CouMar2}). Moreover, further characterizations are available, and given in terms of
conditions of somewhat different flavor, as the following theorem illustrates.

Let $\sigma$ be a permutation on $[n]$. The \emph{standard simplex} of $X^n$ associated with $\sigma$ is the subset $X^n_\sigma\subset X^n$
defined by $X^n_\sigma =\{\vect{x}\in X^n\colon x_{\sigma (1)}\leqslant x_{\sigma (2)}\leqslant \cdots\leqslant x_{\sigma (n)}\}.$

\begin{theorem}[{\cite{CouMar2}}]\label{theorem:WLP-comonot}
Let $X$ be a bounded chain. A function $f\colon X^{n}\rightarrow X$ is a polynomial function if and only if it is
$\overline{\mathcal{R}}_f$-idempotent, and comonotonic minitive and comonotonic maxitive, that is, for every permutation $\sigma$ on $[n]$, and
every $\vect{x},\vect{x}'\in X^n_\sigma$,
$$
f(\vect{x}\wedge \vect{x}') = f(\vect{x})\wedge f(\vect{x}')  \quad \mbox{and} \quad f(\vect{x}\vee \vect{x}') = f(\vect{x})\vee f(\vect{x}'),
\quad \mbox{resp.}
$$
\end{theorem}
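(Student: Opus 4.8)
The plan is to establish the two implications separately. For the forward direction I would transfer properties of a polynomial function $f$ to its diagonal section $\delta_f$ and to its values on $\{0,1\}^n$ via Fact~\ref{nondecreasing} and the normal forms (\ref{eq:DNF45}); for the converse, starting from the three stated conditions, I would reconstruct $f$ explicitly on each standard simplex and recognize it there as the restriction of one fixed polynomial function.

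\emph{Necessity.} Suppose $f$ is a polynomial function. By Fact~\ref{nondecreasing}, $\delta_f(c)=\langle c\rangle_f=\med(f(\vect{0}),c,f(\vect{1}))$ for all $c\in X$; since $X$ is a chain and $f$ is order-preserving, $f(\vect{0})=\min\mathcal{R}_f$ and $f(\vect{1})=\max\mathcal{R}_f$, so every $c\in\overline{\mathcal{R}}_f$ lies between $f(\vect{0})$ and $f(\vect{1})$ and thus satisfies $\delta_f(c)=c$; hence $f$ is $\overline{\mathcal{R}}_f$-idempotent. To prove comonotonic minitivity, I would fix a permutation $\sigma$ and $\vect{x},\vect{x}'\in X^n_\sigma$; order-preservation gives $f(\vect{x}\wedge\vect{x}')\leqslant f(\vect{x})\wedge f(\vect{x}')$, so only the reverse inequality needs work. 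The key observation is that on a chain $\bigwedge_{i\in I}x_i$ equals the single component $x_{\sigma(m(I))}$, where $\sigma(m(I))$ denotes the $\sigma$-least element of $I$, and, since $\vect{x},\vect{x}'\in X^n_\sigma$, the same index minimizes the $x'_i$ over $I$; combining this with $\bigwedge_{i\in I}(x_i\wedge x'_i)=(\bigwedge_{i\in I}x_i)\wedge(\bigwedge_{i\in I}x'_i)$ and expanding $f(\vect{x})\wedge f(\vect{x}')$ via the DNF in (\ref{eq:DNF45}) into a join of terms $\alpha_f(I)\wedge\alpha_f(J)\wedge x_{\sigma(m(I))}\wedge x'_{\sigma(m(J))}$, one checks (assuming $m(I)\leqslant m(J)$, without loss of generality) that each such term is bounded above by the $J$-th term of the DNF of $f(\vect{x}\wedge\vect{x}')$. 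Comonotonic maxitivity follows dually from the CNF in (\ref{eq:DNF45}).

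\emph{Sufficiency.} Assume $f$ is $\overline{\mathcal{R}}_f$-idempotent, comonotonic minitive and comonotonic maxitive. First I would observe that whenever $I\subseteq J$ the tuples $\vect{e}_I\leqslant\vect{e}_J$ are comonotonic (order the indices outside $J$ first, then those in $J\setminus I$, then those in $I$), so comonotonic minitivity makes $f$ order-preserving on $\{0,1\}^n$; in particular $\bigvee_{J\subseteq I}f(\vect{e}_J)=f(\vect{e}_I)$ and $f(\vect{0})\leqslant f(\vect{e}_I)\leqslant f(\vect{1})$ for every $I\subseteq[n]$. Next, since constant tuples are comonotonic with every tuple, the restrictions of comonotonic minitivity and maxitivity to constant arguments, together with $\overline{\mathcal{R}}_f$-idempotence, show that $\delta_f$ is order-preserving and homogeneous on $\overline{\mathcal{R}}_{\delta_f}$; by Theorem~\ref{mainChar} it is a polynomial function, so $\delta_f(x)=\med(f(\vect{0}),x,f(\vect{1}))=(f(\vect{0})\vee x)\wedge f(\vect{1})$. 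Now fix a permutation $\sigma$, a tuple $\vect{x}\in X^n_\sigma$, and set $I_k=\{\sigma(k),\dots,\sigma(n)\}$; then $\vect{x}=\bigvee_{k=1}^{n}(x_{\sigma(k)}\wedge\vect{e}_{I_k})$, and all the tuples occurring here, as well as all their partial joins, lie in the sublattice $X^n_\sigma$. Iterating comonotonic maxitivity and then applying comonotonic minitivity against the constant tuples $(x_{\sigma(k)},\dots,x_{\sigma(k)})$ yields $f(\vect{x})=\bigvee_{k=1}^{n}(\delta_f(x_{\sigma(k)})\wedge f(\vect{e}_{I_k}))$, which, after substituting the expression for $\delta_f$ and using $f(\vect{0})\leqslant f(\vect{e}_{I_k})\leqslant f(\vect{1})$ and distributivity, collapses to $f(\vect{x})=f(\vect{0})\vee\bigvee_{k=1}^{n}(x_{\sigma(k)}\wedge f(\vect{e}_{I_k}))$ for every $\vect{x}\in X^n_\sigma$.

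To conclude, I would let $g\colon X^n\to X$ be the polynomial function given by the DNF in (\ref{eq:DNF45}) with coefficients $\alpha_g(I)=f(\vect{e}_I)$; evaluating this DNF on a tuple of $X^n_\sigma$, grouping the terms according to the $\sigma$-least index they involve and using the order-preservation of $f$ on $\{0,1\}^n$, one obtains precisely $g(\vect{x})=f(\vect{0})\vee\bigvee_{k=1}^{n}(x_{\sigma(k)}\wedge f(\vect{e}_{I_k}))=f(\vect{x})$ for $\vect{x}\in X^n_\sigma$. Since $X$ is a chain, the simplices $X^n_\sigma$ cover $X^n$, so $f=g$ is a polynomial function. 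I expect the main obstacle to be the sufficiency computation: because the hypotheses constrain $f$ only on comonotonic arguments, one must decompose an arbitrary $\vect{x}\in X^n_\sigma$ using tuples that all remain inside the single simplex $X^n_\sigma$ (so that iterated comonotonic maxitivity is legitimate) and then match the resulting join, term by term, with the DNF of $g$; on the other hand, no separate verification that $f$ is order-preserving on all of $X^n$ is needed, since coincidence with a polynomial function on each simplex already forces $f=g$.
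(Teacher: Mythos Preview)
The paper does not prove Theorem~\ref{theorem:WLP-comonot}: it is quoted from \cite{CouMar2} and stated without proof, so there is no in-paper argument to compare your proposal against.

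That said, your proof is a correct self-contained argument. The necessity part is routine once you use the DNF/CNF representations (\ref{eq:DNF45}) and the chain hypothesis to reduce $\bigwedge_{i\in I}x_i$ to a single coordinate. For sufficiency, your key moves are all sound: constant tuples are comonotonic with every tuple, which forces $\delta_f$ to be a lattice homomorphism and hence, via $\overline{\mathcal{R}}_f$-idempotence and Theorem~\ref{mainChar}, the unary polynomial $\med(f(\vect{0}),\cdot,f(\vect{1}))$; the decomposition $\vect{x}=\bigvee_{k}(x_{\sigma(k)}\wedge\vect{e}_{I_k})$ stays inside the sublattice $X^n_\sigma$ (this uses that $X$ is a chain), so iterated comonotonic maxitivity and then minitivity against constants are legitimate; and the resulting expression matches the DNF polynomial $g$ with coefficients $\alpha_g(I)=f(\vect{e}_I)$ after grouping by the $\sigma$-least index and using the order-preservation of $f$ on $\{0,1\}^n$ that you established beforehand. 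One small point worth making explicit is that $\vect{e}_{I_k}\in X^n_\sigma$ for every $k$, so the minitivity step $f(x_{\sigma(k)}\wedge\vect{e}_{I_k})=\delta_f(x_{\sigma(k)})\wedge f(\vect{e}_{I_k})$ is indeed taken between two tuples of $X^n_\sigma$.
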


\section{Quasi-polynomial functions and quasi-Sugeno integrals}

The notions of polynomial function and Sugeno integral can be naturally extended to functions defined on a bounded distributive lattice $X$ and
valued on a possibly different bounded distributive lattice $Y$ via the concepts of quasi-polynomial function and quasi-Sugeno integral.

\begin{definition}\label{de:QP-QS}
We say that a function $f\colon X^n\to Y$ is a \emph{quasi-polynomial function} (resp.\ a \emph{quasi-Sugeno integral}) if there exist a
polynomial function (resp.\ a Sugeno integral) $p\colon Y^n\to Y$ and a unary function $\varphi\colon X\to Y$, satisfying
$\varphi=\langle\varphi\rangle_{\varphi}$, such that $f=p\circ\varphi$, that is,
\begin{equation}\label{eq:QuasiPol2}
f(x_1,\ldots,x_n)=p(\varphi(x_1),\ldots,\varphi(x_n)).
\end{equation}
\end{definition}

\begin{remark}
The condition $\varphi=\langle\varphi\rangle_{\varphi}$ in Definition~\ref{de:QP-QS} simply ensures that the values of $\varphi$ are not too
scattered with respect to $\varphi(0)$ and $\varphi(1)$. More precisely, the values of $\varphi$ lie in the interval
$[\varphi(0)\wedge\varphi(1),\varphi(0)\vee\varphi(1)]$. In the case when $\varphi$ is monotone, this condition is satisfied since it translates
into saying that $\varphi(0)\leqslant\varphi(x)\leqslant\varphi(1)$ or $\varphi(1)\leqslant\varphi(x)\leqslant\varphi(0)$.
\end{remark}

It is easy to see that the functions $p$ and $\varphi$ in (\ref{eq:QuasiPol2}) need not be unique. For instance, if $f$ is a constant $c\in Y$,
then we could choose $p\equiv c$ and $\varphi$ arbitrarily, or $p$ idempotent and $\varphi\equiv c$. We make use of the following lemma to show
that we can always choose $\delta_f$ for $\varphi$.

\begin{lemma}\label{lemma:45474786}
Every polynomial function $p\colon X^n\to X$ satisfies $p(\vect{x})=p(\langle\vect{x}\rangle_p)$, $p(\vect{x}\vee c)=p(\vect{x})\vee\langle
c\rangle_p$, and $p(\vect{x}\wedge c)=p(\vect{x})\wedge\langle c\rangle_p$ for every $\vect{x}\in X^n$ and every $c\in X$.
\end{lemma}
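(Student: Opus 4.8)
The plan is to prove the three identities by reducing each one to the behaviour of a Sugeno integral and then invoking the normal form representations from Section 3. First I would recall that, by Fact~\ref{nondecreasing} and the discussion following Proposition~\ref{Goodstein}, every polynomial function $p\colon X^n\to X$ can be written as $p=\langle q\rangle_p$ for a Sugeno integral $q\colon X^n\to X$, i.e.\ $p(\vect{x})=\med(p(\vect{0}),q(\vect{x}),p(\vect{1}))$, where $q(\vect{0})=0$ and $q(\vect{1})=1$; also $\langle c\rangle_p=\med(p(\vect{0}),c,p(\vect{1}))$. The strategy is thus to first establish the identities for Sugeno integrals $q$ (where $\langle c\rangle_q=c$, so they take the cleaner form $q(\vect{x})=q(\langle\vect{x}\rangle_q)$, $q(\vect{x}\vee c)=q(\vect{x})\vee c$, $q(\vect{x}\wedge c)=q(\vect{x})\wedge c$) and then transfer them to a general $p$ via the median wrapper, using distributivity of $\med$ (equivalently, of $\wedge$ and $\vee$) over the lattice operations.

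For the Sugeno integral case I would use the DNF representation from the proposition in Section 3: $q(\vect{x})=\bigvee_{I\subseteq[n]}\bigl(\alpha_q(I)\wedge\bigwedge_{i\in I}x_i\bigr)$ with $\alpha_q([n])=q(\vect{1})=1$ and $\alpha_q(\emptyset)=q(\vect{0})=0$. For the homogeneity identity $q(\vect{x}\vee c)=q(\vect{x})\vee c$, plug $\vect{x}\vee c$ into the DNF and use distributivity: $\bigwedge_{i\in I}(x_i\vee c)=\bigl(\bigwedge_{i\in I}x_i\bigr)\vee c$; distributing the outer $\bigvee$ and using that the term $I=[n]$ contributes $\alpha_q([n])\wedge\bigwedge_i(x_i\vee c)$ which dominates the lone $c$, one collapses the expression to $q(\vect{x})\vee c$. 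Dually, for $q(\vect{x}\wedge c)=q(\vect{x})\wedge c$ I would use the CNF representation $q(\vect{x})=\bigwedge_{I\subseteq[n]}\bigl(\beta_q(I)\vee\bigvee_{i\in I}x_i\bigr)$ with $\beta_q([n])=q(\vect{0})=0$, and argue symmetrically. The identity $q(\vect{x})=q(\langle\vect{x}\rangle_q)$ then follows because, for a Sugeno integral, $\langle\vect{x}\rangle_q=\med(0,\vect{x},1)=\vect{x}$ componentwise is not quite right in general; instead $\langle\vect{x}\rangle_q=\vect{x}\vee q(\vect{0})$ truncated by $q(\vect{1})$, but for a Sugeno integral $q(\vect{0})=0$ and $q(\vect{1})=1$ so $\langle\vect{x}\rangle_q=\vect{x}$ and the identity is trivial — it is for general $p$ that this statement has content, obtained by combining the two homogeneity results with $\langle\vect{x}\rangle_p=\med(p(\vect{0}),\vect{x},p(\vect{1}))=(\vect{x}\vee p(\vect{0}))\wedge p(\vect{1})$ (using $p(\vect{0})\leqslant p(\vect{1})$).

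To pass from $q$ to $p=\langle q\rangle_p$, write $p(\vect{x})=(q(\vect{x})\vee p(\vect{0}))\wedge p(\vect{1})$. Then for the join identity, $p(\vect{x}\vee c)=(q(\vect{x}\vee c)\vee p(\vect{0}))\wedge p(\vect{1})=(q(\vect{x})\vee c\vee p(\vect{0}))\wedge p(\vect{1})$, and by distributivity this equals $\bigl((q(\vect{x})\vee p(\vect{0}))\wedge p(\vect{1})\bigr)\vee\bigl((c\vee p(\vect{0}))\wedge p(\vect{1})\bigr)=p(\vect{x})\vee\langle c\rangle_p$; similarly for the meet identity using the dual expression $p(\vect{x})=(q(\vect{x})\wedge p(\vect{1}))\vee p(\vect{0})$. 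Finally $p(\vect{x})=p(\langle\vect{x}\rangle_p)$ follows by applying the meet identity then the join identity (or directly: $p(\langle\vect{x}\rangle_p)=p((\vect{x}\vee p(\vect{0}))\wedge p(\vect{1}))=(p(\vect{x}\vee p(\vect{0}))\wedge\langle p(\vect{1})\rangle_p=\ldots$), simplifying with $\langle p(\vect{0})\rangle_p=p(\vect{0})$ and $\langle p(\vect{1})\rangle_p=p(\vect{1})$, which hold because $p(\vect{0}),p(\vect{1})\in\mathcal{R}_p$ and Fact~\ref{nondecreasing} gives $\delta_p(c)=\langle c\rangle_p=c$ there.

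The main obstacle I anticipate is purely bookkeeping: keeping the order $p(\vect{0})\leqslant p(\vect{1})$ straight so that $\med(p(\vect{0}),t,p(\vect{1}))=(t\vee p(\vect{0}))\wedge p(\vect{1})$, and making sure the degenerate DNF/CNF terms ($I=\emptyset$ and $I=[n]$) are handled correctly when distributing — these are exactly the terms that interact with the extra constant $c$ and absorb or get absorbed by it. There is no deep difficulty: the whole argument is an exercise in distributivity plus the normal form representations, and alternatively one could run an even shorter induction on the structure of the polynomial function $p$ (checking the identities for projections and constants, then for $p_1\wedge p_2$ and $p_1\vee p_2$), which avoids the normal forms entirely at the cost of verifying the induction step distributes correctly in each case.
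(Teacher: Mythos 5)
Your proof is correct, but it follows a genuinely different route from the paper's. The paper's argument is a two-line application of Theorem~\ref{mainChar}(iii): since $p$ is $\vee_{\overline{\mathcal{R}}_p}$- and $\wedge_{\overline{\mathcal{R}}_p}$-homogeneous and the constants $p(\vect{0})$, $p(\vect{1})$, $\langle c\rangle_p$ all lie in $\overline{\mathcal{R}}_p$, one gets $p(\vect{x})=p(\langle\vect{x}\rangle_p)$ at once, and then $p(\vect{x})\vee\langle c\rangle_p=p(\langle\vect{x}\rangle_p)\vee\langle c\rangle_p=p(\langle\vect{x}\vee c\rangle_p)=p(\vect{x}\vee c)$, the third identity being dual. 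You avoid Theorem~\ref{mainChar} entirely: you establish full $\vee$- and $\wedge$-homogeneity for Sugeno integrals directly from the DNF/CNF normal forms (where $\alpha_q([n])=1$ and $\beta_q([n])=0$ absorb the stray constant, as you note), and then conjugate by the median wrapper $p=\langle q\rangle_p$, using $p(\vect{0})\leqslant p(\vect{1})$ and distributivity; the first identity then follows from the other two together with $\langle p(\vect{0})\rangle_p=p(\vect{0})$ and $\langle p(\vect{1})\rangle_p=p(\vect{1})$, which Fact~\ref{nondecreasing} supplies. Both arguments are sound and non-circular: your ingredients (Goodstein's normal forms, Fact~\ref{nondecreasing}, and the decomposition $p=\langle q\rangle_p$) are all available before the lemma and do not depend on it. What your route buys is self-containedness, since it needs only the normal forms rather than the homogeneity characterization imported from the earlier papers; what it costs is length and some bookkeeping with the degenerate terms $I=\emptyset$ and $I=[n]$. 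The structural-induction variant you mention at the end would also work and is perhaps the most elementary option of all.
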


\begin{proof}
Since $p$ is $\vee_{\overline{\mathcal{R}}_p}$- and $\wedge_{\overline{\mathcal{R}}_p}$-homogeneous, we have
$p(\vect{x})=p(\langle\vect{x}\rangle_p)$. We then have $p(\vect{x})\vee\langle c\rangle_p=p(\langle\vect{x}\rangle_p)\vee\langle
c\rangle_p=p(\langle\vect{x}\vee c\rangle_p)=p(\vect{x}\vee c)$. The last identity can be proved dually.
\end{proof}

\begin{proposition}\label{prop:ff56}
Let $p\colon Y^n\to Y$ be a polynomial function, $\varphi\colon X\to Y$ a unary function, and set $f=p\circ\varphi$. Then, we have
$\delta_f=\langle\varphi\rangle_p$ and $f=p\circ\delta_f$. In particular, if $p$ is a Sugeno integral, then we have $\varphi=\delta_f$.
\end{proposition}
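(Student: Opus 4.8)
The plan is to prove the two identities $\delta_f=\langle\varphi\rangle_p$ and $f=p\circ\delta_f$ by direct computation using Lemma~\ref{lemma:45474786}, and then to derive the final assertion about Sugeno integrals as an easy special case. For the first identity, I would simply unfold the definitions: by definition of the diagonal section, $\delta_f(x)=f(x,\ldots,x)=p(\varphi(x),\ldots,\varphi(x))=\delta_p(\varphi(x))$. By Fact~\ref{nondecreasing}, $\delta_p(c)=\langle c\rangle_p$ for every $c\in Y$, so $\delta_f(x)=\langle\varphi(x)\rangle_p$. Recalling that $\langle c\rangle_p=\med(p(\vect 0),c,p(\vect 1))$ and reading the definition of $\langle\cdot\rangle_p$ applied componentwise to the tuple $\varphi(\vect x)$, one gets $\delta_f=\langle\varphi\rangle_p$ in the sense that $\delta_f(x)=\langle\varphi(x)\rangle_p$ for all $x\in X$; I should be a little careful here to state precisely in what sense the equality holds (pointwise, after identifying $\langle\varphi\rangle_p$ with the unary map $x\mapsto\langle\varphi(x)\rangle_p$).

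For the second identity, the key tool is the first part of Lemma~\ref{lemma:45474786}, namely $p(\vect y)=p(\langle\vect y\rangle_p)$ for all $\vect y\in Y^n$. Applying this with $\vect y=\varphi(\vect x)$ gives
\[
f(\vect x)=p(\varphi(\vect x))=p(\langle\varphi(\vect x)\rangle_p).
\]
Now $\langle\varphi(\vect x)\rangle_p$ is, componentwise, the tuple whose $i$th entry is $\med(p(\vect 0),\varphi(x_i),p(\vect 1))=\langle\varphi(x_i)\rangle_p=\delta_f(x_i)$ by the first identity. Hence $\langle\varphi(\vect x)\rangle_p=\delta_f(\vect x):=(\delta_f(x_1),\ldots,\delta_f(x_n))$, and therefore $f(\vect x)=p(\delta_f(\vect x))=(p\circ\delta_f)(\vect x)$, i.e., $f=p\circ\delta_f$.

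Finally, suppose $p$ is a Sugeno integral, so that $p(\vect 0)=0_Y$ and $p(\vect 1)=1_Y$. Then for every $c\in Y$ we have $\langle c\rangle_p=\med(0_Y,c,1_Y)=c$, and in particular $\delta_f(x)=\langle\varphi(x)\rangle_p=\varphi(x)$ for all $x\in X$; that is, $\varphi=\delta_f$. This also shows the representation $f=p\circ\varphi$ coincides with $f=p\circ\delta_f$ in this case.

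I do not expect any serious obstacle: the proof is a short chain of substitutions resting entirely on Fact~\ref{nondecreasing} and Lemma~\ref{lemma:45474786}. The only point requiring mild care is notational bookkeeping—making explicit that $\langle\varphi\rangle_p$ denotes the unary map $x\mapsto\med(p(\vect 0),\varphi(x),p(\vect 1))$ and that $\delta_f(\vect x)$ denotes the componentwise application of $\delta_f$—so that the manipulations of $n$-tuples versus unary maps are unambiguous. Once that convention is fixed, each step is immediate.
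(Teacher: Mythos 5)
Your proof is correct and follows essentially the same route as the paper: obtain $\delta_f=\delta_p\circ\varphi=\langle\varphi\rangle_p$ from Fact~\ref{nondecreasing}, then apply the identity $p(\vect{x})=p(\langle\vect{x}\rangle_p)$ from Lemma~\ref{lemma:45474786} to $\varphi(\vect{x})$ to get $f=p\circ\delta_f$, with the Sugeno case following since $\langle c\rangle_p=c$ when $p(\vect{0})=0$ and $p(\vect{1})=1$. Your version merely spells out the pointwise bookkeeping that the paper leaves implicit.
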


\begin{proof}
By Fact~\ref{nondecreasing}, we have $\delta_f=\delta_p\circ\varphi=\langle\varphi\rangle_p$ and hence, by Lemma~\ref{lemma:45474786},
$f=p\circ\varphi=p\circ\langle\varphi\rangle_p=p\circ\delta_f$. The second part follows immediately.
\end{proof}

To explicitly describe all possible factorizations of quasi-polynomial functions into compositions of polynomial functions with unary maps, we
shall make use of the following useful tool.

\begin{lemma}\label{lemma:jff8sdf}
Let $p\colon Y^n\to Y$ be a polynomial function, $\varphi\colon X\to Y$ a unary function, and set $f=p\circ\varphi$. Then $\langle
p(\vect{x})\rangle_f=p(\langle\vect{x}\rangle_{\varphi})$ for every $\vect{x}\in Y^n$. In particular, if
$\varphi=\langle\varphi\rangle_{\varphi}$, then $f=\langle f\rangle_{f}$.
\end{lemma}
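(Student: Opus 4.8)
The plan is to establish the identity $\langle p(\vect{x})\rangle_f = p(\langle\vect{x}\rangle_\varphi)$ for all $\vect{x}\in Y^n$ by unwinding both sides through the definitions and applying Lemma~\ref{lemma:45474786} together with Proposition~\ref{prop:ff56}. First I would note that $f=p\circ\varphi$ gives $\delta_f = \langle\varphi\rangle_p$ by Proposition~\ref{prop:ff56}, so that $f(\vect{0}) = \delta_f(0) = \med(p(\vect{0}),\varphi(0),p(\vect{1}))$ and similarly $f(\vect{1}) = \med(p(\vect{0}),\varphi(1),p(\vect{1}))$. Hence for $\vect{x}\in Y^n$,
$$
\langle p(\vect{x})\rangle_f = \med\big(f(\vect{0}),\, p(\vect{x}),\, f(\vect{1})\big),
$$
the median being taken componentwise; since $p(\vect{x})$ is a single element of $Y$ this is just the scalar median $\med(f(\vect{0}),p(\vect{x}),f(\vect{1}))$.

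Next I would compute the right-hand side. By definition $\langle\vect{x}\rangle_\varphi = \med(\varphi(0),\vect{x},\varphi(1))$ componentwise, which can be rewritten as $(\vect{x}\vee a)\wedge b$ where, writing $a = \varphi(0)\wedge\varphi(1)$ and $b = \varphi(0)\vee\varphi(1)$, one uses the identity $\med(a,t,b) = (t\vee a)\wedge b = (t\wedge b)\vee a$ valid when $a\leqslant b$ in a distributive lattice. Applying $p$ and invoking Lemma~\ref{lemma:45474786} twice (once for $\vee$, once for $\wedge$),
$$
p(\langle\vect{x}\rangle_\varphi) = p\big((\vect{x}\vee a)\wedge b\big) = \big(p(\vect{x})\vee\langle a\rangle_p\big)\wedge\langle b\rangle_p = \med\big(\langle a\rangle_p,\, p(\vect{x}),\, \langle b\rangle_p\big),
$$
using $\langle a\rangle_p \leqslant \langle b\rangle_p$, which holds because $a\leqslant b$ and $\langle\cdot\rangle_p = \delta_p$ is order-preserving (Fact~\ref{nondecreasing}). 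So it remains to identify $\langle a\rangle_p$ with $f(\vect{0})$ and $\langle b\rangle_p$ with $f(\vect{1})$ — equivalently, $\med(p(\vect{0}),\varphi(0)\wedge\varphi(1),p(\vect{1})) = \med(p(\vect{0}),\varphi(0),p(\vect{1}))$ and similarly with $\vee$ and $\varphi(1)$. This is exactly where the hypothesis $\varphi = \langle\varphi\rangle_\varphi$ enters: it forces $\varphi(0)$ and $\varphi(1)$ — which are components of $\varphi$ evaluated at $\vect{0}$ and $\vect{1}$ — to lie in the interval $[\varphi(0)\wedge\varphi(1),\,\varphi(0)\vee\varphi(1)] = [a,b]$, so $a\leqslant\varphi(0)\leqslant b$ and $a\leqslant\varphi(1)\leqslant b$, whence $\delta_p$ being order-preserving sandwiches $\langle\varphi(0)\rangle_p$ between $\langle a\rangle_p$ and $\langle b\rangle_p$; one then checks the medians coincide by a direct distributive-lattice computation. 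The main obstacle is precisely this last bookkeeping step — correctly tracking how $\langle\cdot\rangle_p$ interacts with the meet and join of $\varphi(0),\varphi(1)$ and verifying the two scalar medians agree; everything else is mechanical substitution.

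For the ``in particular'' clause, suppose $\varphi = \langle\varphi\rangle_\varphi$. By Proposition~\ref{prop:ff56} we may take $\vect{x} = \delta_f$-type arguments, but more directly: the first part applied with $\vect{x}$ ranging over all of $Y^n$ shows $\mathcal{R}_f = \{\langle p(\vect{x})\rangle_f : \vect{x}\in Y^n\}$ consists of fixed points of $\langle\cdot\rangle_f$. Indeed for any $\vect{y}\in X^n$, $f(\vect{y}) = p(\varphi(\vect{y}))$, and applying the identity with $\vect{x} = \varphi(\vect{y})$ gives $\langle f(\vect{y})\rangle_f = \langle p(\varphi(\vect{y}))\rangle_f = p(\langle\varphi(\vect{y})\rangle_\varphi) = p(\varphi(\vect{y})) = f(\vect{y})$, where the penultimate equality uses $\langle\varphi(\vect{y})\rangle_\varphi = \varphi(\vect{y})$, which follows componentwise from $\varphi = \langle\varphi\rangle_\varphi$. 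Since this holds for every $\vect{y}$, we conclude $f = \langle f\rangle_f$.
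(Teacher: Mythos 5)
Your overall route (rewrite $p(\langle\vect{x}\rangle_{\varphi})$ via Lemma~\ref{lemma:45474786} as $\big(p(\vect{x})\vee\langle a\rangle_p\big)\wedge\langle b\rangle_p$ with $a=\varphi(0)\wedge\varphi(1)$, $b=\varphi(0)\vee\varphi(1)$, and compare with $\med\big(f(\vect{0}),p(\vect{x}),f(\vect{1})\big)$) is exactly the paper's computation read backwards, and your treatment of the ``in particular'' clause is correct. But the final step of your main argument has a genuine error: you reduce the problem to showing $\langle a\rangle_p=f(\vect{0})$ and $\langle b\rangle_p=f(\vect{1})$, i.e.\ $\med\big(p(\vect{0}),\varphi(0)\wedge\varphi(1),p(\vect{1})\big)=\med\big(p(\vect{0}),\varphi(0),p(\vect{1})\big)$ (and dually), and this is false in general. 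Take $n=1$, $p=\mathrm{id}_Y$ on a chain $Y$, $\varphi(0)=1_Y$, $\varphi(1)=0_Y$: then $\langle a\rangle_p=0\neq 1=f(\vect{0})$, although the lemma's identity does hold (both sides equal $\med(1,x,0)=x$). Note also that the condition $\varphi=\langle\varphi\rangle_{\varphi}$, which you invoke at precisely this point, is not available for the first assertion of the lemma (it is hypothesized only in the ``in particular'' clause), and in any case it holds in the counterexample above, so it cannot rescue the claimed identification; the containment $a\leqslant\varphi(0),\varphi(1)\leqslant b$ you derive from it is automatic and the resulting ``sandwich'' $\langle a\rangle_p\leqslant f(\vect{0}),f(\vect{1})\leqslant\langle b\rangle_p$ is too weak: if $f(\vect{0})=f(\vect{1})$ were an element strictly between $\langle a\rangle_p$ and $\langle b\rangle_p$, the two medians would differ.

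What is actually needed — and what the paper does — is to observe that the median depends only on the meet and join of its outer arguments,
$$
\med\big(f(\vect{0}),t,f(\vect{1})\big)=\big(f(\vect{0})\wedge f(\vect{1})\big)\vee\Big(t\wedge\big(f(\vect{0})\vee f(\vect{1})\big)\Big),
$$
and then to identify $f(\vect{0})\wedge f(\vect{1})=\langle a\rangle_p$ and $f(\vect{0})\vee f(\vect{1})=\langle b\rangle_p$. These identities follow from $f(\vect{0})=\langle\varphi(0)\rangle_p$, $f(\vect{1})=\langle\varphi(1)\rangle_p$ (Proposition~\ref{prop:ff56}) together with the fact that $c\mapsto\langle c\rangle_p=\med\big(p(\vect{0}),c,p(\vect{1})\big)$ preserves $\wedge$ and $\vee$ (a direct consequence of distributivity, or of Lemma~\ref{lemma:45474786} applied to constant tuples). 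With that replacement your chain of equalities closes, with no hypothesis on $\varphi$, as in the paper.
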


\begin{proof}
For every $\vect{x}\in Y^n$, we have
\begin{eqnarray*}
\langle p(\vect{x})\rangle_f &=& \med\big(f(\vect{0}),p(\vect{x}),f(\vect{1)}\big)\\
&=& \big(f(\vect{0})\wedge f(\vect{1})\big)\vee\big(p(\vect{x})\wedge\big(f(\vect{0})\vee f(\vect{1})\big)\big)\\
&=& \big\langle\varphi(0)\wedge\varphi(1)\big\rangle_p\vee
\big(p(\vect{x})\wedge\big\langle\varphi(0)\vee\varphi(1)\big\rangle_p\big)\qquad \mbox{(by Proposition~\ref{prop:ff56})}\\
&=& p\big((\varphi(0)\wedge\varphi(1))\vee(\vect{x}\wedge(\varphi(0)\vee\varphi(1))\big)\qquad \mbox{(by Lemma~\ref{lemma:45474786})}\\
&=& p(\langle\vect{x}\rangle_{\varphi}).
\end{eqnarray*}
In particular, if $\varphi=\langle\varphi\rangle_{\varphi}$, then $\langle f\rangle_{f}=\langle p\circ\varphi\rangle_f =
p\circ\langle\varphi\rangle_{\varphi}=p\circ\varphi=f$.
\end{proof}

The following proposition provides alternative factorizations of a quasi-polynomial function. Let $p_f\colon Y^n\to Y$ be the unique polynomial
function extending $\hat f$ (see Proposition~\ref{Goodstein}).

\begin{proposition}\label{prop:DescrPP545664}
Let $f\colon X^n\to Y$ be a quasi-polynomial function, $p\colon Y^n\to Y$ a polynomial function, and $\varphi\colon X\to Y$ a unary function
satisfying $\varphi=\langle\varphi\rangle_{\varphi}$. Then we have $f=p\circ\varphi$ if and only if $p_f=\langle p\rangle_f$ and
$\delta_f=\langle\varphi\rangle_p$. In particular, we have $f=p_f\circ\delta_f$.
\end{proposition}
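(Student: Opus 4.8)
The plan is to establish the biconditional in two directions, and then deduce the final ``in particular'' claim by applying the forward direction to the canonical factorization. Throughout I will exploit the key identity $\hat f = \hat{g}$ whenever $f$ and $g$ agree on $\{0,1\}^n$, together with Proposition~\ref{Goodstein}, which says a polynomial function is completely determined by its restriction to $\{0,1\}^n$ and hence $p_f$ is the unique polynomial function with $p_f|_{\{0,1\}^n} = \hat f$.

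First I would prove the ``only if'' direction. Assume $f = p\circ\varphi$ with $\varphi = \langle\varphi\rangle_\varphi$. By Proposition~\ref{prop:ff56} we immediately get $\delta_f = \langle\varphi\rangle_p$, which is half of what is needed. For the other half, I want to show $p_f = \langle p\rangle_f$. Since both sides are polynomial functions (the right-hand side is a polynomial function because $\langle p\rangle_f = \med(f(\vect 0),p,f(\vect 1))$ is obtained from $p$ by composing with $\wedge$, $\vee$ and constants), by Proposition~\ref{Goodstein} it suffices to check they agree on $\{0,1\}^n$, i.e.\ that $\hat f(\vect e) = \langle p\rangle_f(\vect e)$ for every $\vect e\in\{0,1\}^n$. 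Using Lemma~\ref{lemma:jff8sdf} with the tuple $\varphi(\vect e')$ one evaluates $f(\vect e') = p(\varphi(\vect e'))$ inside the definition~(\ref{eq:hat}) of $\hat f$; expanding the meets and joins over $\vect e'\in\{0,1\}^n$ and pushing them through $p$ (distributivity plus the homogeneity properties of polynomial functions from Lemma~\ref{lemma:45474786}) should collapse the expression to $\langle p(\vect e)\rangle_f$, i.e.\ to $p(\langle\vect e\rangle_\varphi)$ by Lemma~\ref{lemma:jff8sdf} again. A short computation then identifies this with $\langle p\rangle_f$ evaluated at $\vect e$. I expect the main obstacle to be exactly this combinatorial bookkeeping: matching the nested $\bigvee\bigwedge$ structure of $\hat f$ against what $p$ produces, and confirming that the innermost medianization $\langle\cdot\rangle_\varphi$ appearing in Lemma~\ref{lemma:jff8sdf} is precisely absorbed by the fact that $\langle p\rangle_f$ already has its range clamped between $f(\vect 0)$ and $f(\vect 1)$.

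Next, the ``if'' direction. Assume $p_f = \langle p\rangle_f$ and $\delta_f = \langle\varphi\rangle_p$; I must show $f = p\circ\varphi$. Set $g = p\circ\varphi$. By the already-proved forward direction applied to $g$ (noting $\varphi = \langle\varphi\rangle_\varphi$ by hypothesis), $p_g = \langle p\rangle_g$ and $\delta_g = \langle\varphi\rangle_p = \delta_f$. Since $g = p\circ\varphi = p\circ\langle\varphi\rangle_p = p\circ\delta_g$ by Proposition~\ref{prop:ff56}, and similarly I want $f = p_f\circ\delta_f$: indeed $f$ is a quasi-polynomial function, so $f = q\circ\psi$ for some polynomial $q$ and some $\psi$ with $\psi = \langle\psi\rangle_\psi$, and applying the forward direction to this factorization gives $p_f = \langle q\rangle_f$ and $\delta_f = \langle\psi\rangle_q$, whence by Lemma~\ref{lemma:45474786} and Proposition~\ref{prop:ff56} one gets $f = q\circ\psi = q\circ\langle\psi\rangle_q = q\circ\delta_f = \langle q\rangle_f\circ\delta_f = p_f\circ\delta_f$ (using $\langle q\rangle_f\circ\delta_f = q\circ\delta_f$, which holds since $\delta_f$ already takes values in the interval $[f(\vect 0)\wedge f(\vect 1), f(\vect 0)\vee f(\vect 1)]$). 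Now from $p_f = \langle p\rangle_f$ and $f = p_f\circ\delta_f$ we get $f = \langle p\rangle_f\circ\delta_f = p\circ\delta_f$, again because $\delta_f$ is already clamped; and $p\circ\delta_f = p\circ\langle\varphi\rangle_p = p\circ\varphi$ by $\delta_f = \langle\varphi\rangle_p$ and Lemma~\ref{lemma:45474786}. This gives $f = p\circ\varphi$.

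Finally, for the ``in particular'' statement $f = p_f\circ\delta_f$: this was in fact established in the preceding paragraph as a consequence of $f$ being a quasi-polynomial function. Alternatively, once the biconditional is proved, it suffices to verify that the pair $(p,\varphi) = (p_f,\delta_f)$ satisfies the two conditions on the right: $\delta_f = \langle\delta_f\rangle_{p_f}$ follows from Fact~\ref{nondecreasing} since $\delta_{p_f}$ is the diagonal of a polynomial function, and $p_f = \langle p_f\rangle_f$ follows because $p_f|_{\{0,1\}^n} = \hat f$ already lies between $\hat f(\vect 0) = f(\vect 0)$ and $\hat f(\vect 1) = f(\vect 1)$, so medianizing by $f(\vect 0),f(\vect 1)$ changes nothing on $\{0,1\}^n$ and hence, by Proposition~\ref{Goodstein}, nothing at all. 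Then the biconditional yields $f = p_f\circ\delta_f$.
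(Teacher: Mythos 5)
Your overall architecture coincides with the paper's: the forward direction rests on showing $\hat f(\vect{e})=\langle p(\vect{e})\rangle_f$ (this is exactly the paper's Claim), and both the backward direction and the ``in particular'' statement are obtained by applying the forward direction to an arbitrary factorization $f=q\circ\psi$ and to $g=p\circ\varphi$. Two of your justifications, however, are not adequate as written, and they sit exactly at the crux points. First, the collapse of the nested $\bigvee\bigwedge$ in $\hat f(\vect{e})$ to $p(\langle\vect{e}\rangle_\varphi)$, which you leave as ``bookkeeping'', does go through, but the tool is not the homogeneity identities of Lemma~\ref{lemma:45474786} (those concern a constant joined or met with \emph{all} coordinates); what is needed is that a polynomial function over a distributive lattice preserves $\vee$ and $\wedge$ in each coordinate separately (immediate from the DNF representation and distributivity), which is precisely what the paper invokes when it says that $p$ preserves $\vee$ and $\wedge$ componentwise.

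Second, the two ``declamping'' steps, $\langle q\rangle_f\circ\delta_f=q\circ\delta_f$ and $\langle p\rangle_f\circ\delta_f=p\circ\delta_f$, do not follow from ``$\delta_f$ takes values in $[f(\vect{0})\wedge f(\vect{1}),f(\vect{0})\vee f(\vect{1})]$'' alone: a polynomial function evaluated on a tuple with entries in an interval need not take its value in that interval (think of a constant polynomial), so the stated reason proves nothing by itself. The steps are nevertheless correct, and the fix is already contained in your setup: $\langle q\rangle_f\circ\delta_f=\langle q\circ\delta_f\rangle_f=\langle f\rangle_f=f=q\circ\delta_f$ by Lemma~\ref{lemma:jff8sdf}; and for the second, with $g=p\circ\varphi$ you have $g=p\circ\delta_f$, $\delta_g=\delta_f$ (so $g(\vect{0})=f(\vect{0})$ and $g(\vect{1})=f(\vect{1})$) and $g=\langle g\rangle_g$ by Lemma~\ref{lemma:jff8sdf}, whence $\langle p\rangle_f\circ\delta_f=\langle g\rangle_g=g=p\circ\delta_f$. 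This is, in substance, the paper's chain of medianization identities in the sufficiency part. Finally, a small slip in your alternative check of the ``in particular'' claim: for a quasi-polynomial $f$ one has $\hat f(\vect{0})=f(\vect{0})\wedge f(\vect{1})$ and $\hat f(\vect{1})=f(\vect{0})\vee f(\vect{1})$, not $f(\vect{0})$ and $f(\vect{1})$ (the inner function need not be order-preserving); the check survives because the values of $\hat f$ still lie between the clamping constants.
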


\begin{proof}
We first establish the following claim.

\begin{claim}
If $f=p\circ\varphi$, then $\delta_f=\langle\varphi\rangle_p$, $f=p\circ\delta_f$, and $\hat f=\langle p\rangle_f|_{\{0,1\}^n}$.
\end{claim}

\begin{proof}[Proof of the claim]
The first two formulas follow immediately from Proposition~\ref{prop:ff56}. By Lemma~\ref{lemma:jff8sdf}, for every $\vect{e}\in\{0,1\}^n$, we
have $\langle p(\vect{e})\rangle_f=p(\langle\vect{e}\rangle_{\varphi})$, where $\langle\vect{e}\rangle_{\varphi}$ is the $n$-tuple whose $i$th
component is $\varphi(0)\vee\varphi(1)$, if $e_i=1$, and $\varphi(0)\wedge\varphi(1)$, if $e_i=0$. Since $p$ preserves $\vee$ and $\wedge$
componentwise, when expanding $p(\langle\vect{e}\rangle_{\varphi})$ in terms of $\vee$ and then in terms of $\wedge$, we obtain $\langle
p(\vect{e})\rangle_f=\hat f(\vect{e})$. To illustrate, if $\vect{e}=(0,1)$, we have
\begin{eqnarray*}
\langle p(0,1)\rangle_f &=& p\big(\varphi(0)\wedge\varphi(1),\varphi(0)\vee\varphi(1)\big)\\
&=& p\big(\varphi(0)\wedge\varphi(1),\varphi(0)\big)\vee p\big(\varphi(0)\wedge\varphi(1),\varphi(1)\big)\\
&=& \big(p(\varphi(0),\varphi(0))\wedge p(\varphi(1),\varphi(0))\big)\vee\big(p(\varphi(0),\varphi(1))\wedge p(\varphi(1),\varphi(1))\big)\\
&=& \big(f(0,0)\wedge f(1,0)\big)\vee\big(f(0,1)\wedge f(1,1)\big)\\
&=& \hat f(0,1)\qquad (\mbox{by (\ref{eq:f-hat-2})}).\qedhere
\end{eqnarray*}
\end{proof}

(Necessity) Suppose that $f=p\circ\varphi$. By the claim, we have $\delta_f=\langle\varphi\rangle_p$ and $p_f|_{\{0,1\}^n}=\hat f=\langle
p\rangle_f|_{\{0,1\}^n}$. This completes the proof since two polynomial functions having the same values on $\{0,1\}^n$ coincide by
Proposition~\ref{Goodstein}.

(Sufficiency) Since $f$ is a quasi-polynomial function, there exist a polynomial function $q\colon Y^n\to Y$ and a unary function $\psi\colon
X\to Y$, satisfying $\psi=\langle\psi\rangle_{\psi}$, such that $f=q\circ\psi$. By the claim, we then have $\delta_f=\langle\psi\rangle_q$,
$f=q\circ\delta_f$, and
\begin{equation}\label{eq:sdf87fff}
\langle q\rangle_f|_{\{0,1\}^n}=\hat f=p_f|_{\{0,1\}^n}=\langle p\rangle_f|_{\{0,1\}^n},
\end{equation}
which, by Proposition~\ref{Goodstein} and (\ref{eq:sdf87fff}), implies that
\begin{equation}\label{eq:sdf87fff1}
\langle q\rangle_f=\langle p\rangle_f.
\end{equation}

Therefore, we have
\begin{eqnarray*}
p\circ\varphi &=& p\circ\langle\varphi\rangle_{\varphi}\\
&=& \langle p\circ\varphi\rangle_{p\circ\varphi} \qquad \mbox{(by Lemma~\ref{lemma:jff8sdf})}\\
&=& \langle p\circ\varphi\rangle_{\delta_{p\circ\varphi}} \qquad \mbox{(by (\ref{eq:xxx1}))}\\
&=& \langle p\circ\varphi\rangle_{\langle\varphi\rangle_p} \qquad \mbox{(by Proposition~\ref{prop:ff56})}\\
&=& \big\langle p\circ\langle\varphi\rangle_p\big\rangle_{\langle\varphi\rangle_p} \qquad \mbox{(by Lemma~\ref{lemma:45474786})}\\
&=& \langle p\circ\delta_f\rangle_{\delta_f}\\
&=& \langle p\circ\delta_f\rangle_f \qquad \mbox{(by (\ref{eq:xxx1}))}\\
&=& \langle p\rangle_f\circ\delta_f\\
&=& \langle q\rangle_f\circ\delta_f \qquad \mbox{(by (\ref{eq:sdf87fff1}))}\\
&=& \langle q\circ\delta_f\rangle_f\\
&=& \langle f\rangle_f\\
&=& f\qquad \mbox{(by Lemma~\ref{lemma:jff8sdf}).}\qedhere
\end{eqnarray*}
\end{proof}

\begin{remark}\label{remark-hat2}
As mentioned in Remark~\ref{remark-hat}, there are several ways of constructing an order-preserving function from a given $f\colon X^n\to Y$,
e.g., using an expression in DNF (as in (\ref{eq:hat})) or using an expression in CNF (as in (\ref{eq:hatp})). Even though, in general,
different constructions may lead to different order-preserving functions (as in the case of the Boolean sum), by the claim it follows that, if
$f$ is a quasi-polynomial function, then all possible rearrangements of $\vee$ and $\wedge$ produce the same order-preserving function $\hat f$.
However, the converse statement does not hold, since there are order-preserving functions which do not constitute quasi-polynomial functions.
\end{remark}

%
%

Recall that every polynomial function $p\colon X^n\to X$ can be represented as $\langle q\rangle_p$ for some Sugeno integral $q\colon X^n\to X$.
Using this fact, we obtain the following result.

\begin{proposition}\label{prop:Sugeno4565}
A function $f\colon X^n\to Y$ is a quasi-polynomial function if and only if it is a quasi-Sugeno integral.
\end{proposition}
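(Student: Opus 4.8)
The plan is to prove the two directions of the equivalence separately, with the nontrivial content lying in showing that every quasi-polynomial function is already a quasi-Sugeno integral.

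\medskip

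\noindent\textbf{Sketch.} The implication from quasi-Sugeno integral to quasi-polynomial function is immediate from Definition~\ref{de:QP-QS}, since a Sugeno integral is in particular a polynomial function. For the converse, suppose $f\colon X^n\to Y$ is a quasi-polynomial function, so that $f=p\circ\varphi$ for some polynomial function $p\colon Y^n\to Y$ and some $\varphi\colon X\to Y$ with $\varphi=\langle\varphi\rangle_{\varphi}$. First I would invoke the fact recalled just before the statement: $p$ can be written as $p=\langle q\rangle_p$ for a suitable Sugeno integral $q\colon Y^n\to Y$. The idea is then to exhibit a Sugeno integral $r$ and a unary map $\psi$ with $\psi=\langle\psi\rangle_{\psi}$ such that $f=r\circ\psi$. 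The natural candidate is $r=q$ together with $\psi=\langle\varphi\rangle_p$; by Proposition~\ref{prop:ff56} this $\psi$ equals $\delta_f$, so what must be checked is that $f=q\circ\delta_f$ and that $\delta_f=\langle\delta_f\rangle_{\delta_f}$.

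\medskip

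\noindent\textbf{Key steps.} For the first equality, I would compute $q\circ\delta_f = q\circ\langle\varphi\rangle_p$. Using $\mathcal{R}_q=Y$, one has $\langle\vect{x}\rangle_q=\vect{x}$ for all $\vect{x}\in Y^n$, so that $q\circ\langle\varphi\rangle_p = \langle q\rangle_p\circ\langle\varphi\rangle_p$; but Lemma~\ref{lemma:45474786} applied to $p$ gives $\langle q\rangle_p\circ\langle\varphi\rangle_p = p\circ\langle\varphi\rangle_p = p\circ\varphi = f$, where the middle equality again uses Lemma~\ref{lemma:45474786} (namely $p(\langle\vect{x}\rangle_p)=p(\vect{x})$). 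Alternatively, and more cleanly, one can cite Proposition~\ref{prop:DescrPP545664}, which already yields $f=p_f\circ\delta_f$ with $p_f$ the polynomial function extending $\hat f$; one then only needs that $p_f$ itself factors as $\langle q'\rangle_{p_f}$ for a Sugeno integral $q'$, and that $q'\circ\delta_f = p_f\circ\delta_f = f$. For the second requirement, $\delta_f=\langle\delta_f\rangle_{\delta_f}$, I would apply Lemma~\ref{lemma:jff8sdf}: since $\varphi=\langle\varphi\rangle_{\varphi}$, that lemma gives $f=\langle f\rangle_f$, and by \eqref{eq:xxx1} $\langle f\rangle_f=\langle f\rangle_{\delta_f}$, whose diagonal section is $\langle\delta_f\rangle_{\delta_f}$; comparing diagonal sections yields $\delta_f=\langle\delta_f\rangle_{\delta_f}$. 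Hence $f=q'\circ\delta_f$ with $q'$ a Sugeno integral and $\delta_f$ a legitimate inner map, so $f$ is a quasi-Sugeno integral.

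\medskip

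\noindent\textbf{Main obstacle.} The only delicate point is verifying that the composition $q'\circ\delta_f$ really reproduces $f$ rather than some truncation of it — that is, making sure that passing from the polynomial function $p_f$ (or $p$) to its underlying Sugeno integral $q'$ does not lose information once we also replace $\varphi$ by $\delta_f$. This is controlled precisely by the homogeneity identities of Lemma~\ref{lemma:45474786}, which guarantee $p_f(\vect{x})=p_f(\langle\vect{x}\rangle_{p_f})$ and allow the ``$\langle\cdot\rangle$'' brackets to cancel in the right order; so the proof reduces to a careful bookkeeping of these brackets, with no further conceptual difficulty. Everything else is a direct appeal to Proposition~\ref{prop:ff56}, Proposition~\ref{prop:DescrPP545664}, and Lemma~\ref{lemma:jff8sdf}.
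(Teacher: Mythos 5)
Your route is essentially the paper's: both proofs replace $p$ by a Sugeno integral $q$ with $p=\langle q\rangle_p$ and take $\delta_f=\langle\varphi\rangle_p$ as the new inner map (your explicit check that $\delta_f=\langle\delta_f\rangle_{\delta_f}$, required by Definition~\ref{de:QP-QS}, is a detail the paper leaves implicit). The one step whose justification does not hold up as stated is $q\circ\langle\varphi\rangle_p=\langle q\rangle_p\circ\langle\varphi\rangle_p$: the identity $\langle\vect{x}\rangle_q=\vect{x}$ concerns the bracket taken with respect to $q$, whereas the bracket occurring in $\langle q\rangle_p$ is taken with respect to $p$, so that fact by itself does not show that $q(\langle\varphi(\vect{x})\rangle_p)$ is left unchanged by $\med\big(p(\vect{0}),\cdot\,,p(\vect{1})\big)$. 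What is actually needed is that $q$ commutes with $\langle\cdot\rangle_p$, and this follows from the $\wedge_Y$- and $\vee_Y$-homogeneity of the Sugeno integral $q$ (Theorem~\ref{mainChar}, or Lemma~\ref{lemma:45474786} applied to $q$, since $\langle c\rangle_q=c$); the paper's proof is exactly this one-line computation, $q\circ\langle\varphi\rangle_p=\langle q\circ\varphi\rangle_p=\langle q\rangle_p\circ\varphi=p\circ\varphi=f$, which bypasses your intermediate step and the appeal to Lemma~\ref{lemma:45474786} for $p$. (Alternatively, idempotency and order-preservation of $q$ show that $q$ maps the box between $p(\vect{0})$ and $p(\vect{1})$ into that interval, which also repairs your step.) The same caveat applies to your alternative route via Proposition~\ref{prop:DescrPP545664}: the equality $q'\circ\delta_f=p_f\circ\delta_f$ again rests on the homogeneity of the Sugeno integral $q'$, not on the identities for $p_f$ that you cite in your closing paragraph. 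With this correction your argument is complete and coincides with the paper's proof.
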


\begin{proof}
Since Sugeno integrals are polynomial functions, it follows that every quasi-Sugeno integral is a quasi-polynomial function. For the converse,
let $f\colon X^n\to Y$ be a quasi-polynomial function as described in (\ref{eq:QuasiPol2}). Let $q\colon Y^n\to Y$ be a Sugeno integral such
that $p=\langle q\rangle_p$. Since $q$ is $\wedge_Y$- and $\vee_Y$-homogeneous (see Theorem~\ref{mainChar}), we have $q\circ \langle\varphi
\rangle_p=\langle q\circ\varphi \rangle_p=\langle q\rangle_p\circ\varphi =p\circ\varphi=f$, which shows that $f$ is a quasi-Sugeno integral.
\end{proof}

\begin{remark}
Proposition~\ref{prop:Sugeno4565} shows that monotone quasi-polynomial functions are of interest in the ordinal settings of decision making
since they coincide with monotone quasi-Sugeno integrals, which were characterized as overall preference functionals for instance in decision
under uncertainty; see \cite{DubMarPraRouSab01}.
\end{remark}

We now consider a number of characterizations of the class of quasi-polynomial functions. These characterizations are inspired from those
obtained by the authors \cite{CouMar3} in the special case when $X=Y$ is a bounded chain.

We say that a function $f\colon X^n\rightarrow Y$ is \emph{quasi-median decomposable} if, for every $\vect{x}\in X^n$ and every $k\in [n]$, we
have
\begin{equation}\label{quasimedianDecomp}
f(\vect{x})=\med\big(f(\vect{x}^{0}_{k}), \delta_f(x_k), f(\vect{x}^{1}_{k})\big).
\end{equation}
Note that every unary function $\varphi\colon X\to Y$ satisfying $\varphi=\langle\varphi\rangle_{\varphi}$ (in particular, every monotone unary
function) is quasi-median decomposable. The following theorem provides a characterization for quasi-polynomial functions in terms of
quasi-median decomposition.

\begin{theorem}\label{theorem:QuasiMedian}
A function $f\colon X^n\rightarrow Y$ is a quasi-polynomial function if and only if it is quasi-median decomposable.
\end{theorem}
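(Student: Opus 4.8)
The plan is to prove both implications by exploiting the decomposition machinery already developed in the excerpt, in particular Proposition~\ref{prop:DescrPP545664} and Lemma~\ref{lemma:jff8sdf}. For the easy direction, suppose $f=p\circ\varphi$ for a polynomial function $p\colon Y^n\to Y$ and a unary $\varphi\colon X\to Y$ with $\varphi=\langle\varphi\rangle_\varphi$. By Theorem~\ref{mainChar}, $p$ is median decomposable, so for every $\vect{x}\in X^n$ and $k\in[n]$ we have $f(\vect{x})=p(\varphi(\vect{x}))=\med\big(p(\varphi(\vect{x})^0_k),\varphi(x_k),p(\varphi(\vect{x})^1_k)\big)$, where the inner $0$ and $1$ are those of $Y$. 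The only thing to check is that $p(\varphi(\vect{x})^0_k)=f(\vect{x}^0_k)$ and $p(\varphi(\vect{x})^1_k)=f(\vect{x}^1_k)$; this requires replacing the $Y$-constants $0_Y,1_Y$ sitting in the $k$th slot by $\varphi(0_X),\varphi(1_X)$, which is legitimate because median decomposability can be applied with any constant in $\mathcal R_p$ — more precisely, one uses $p(\vect{y}^{0_Y}_k)\le p(\vect{y}^c_k)\le p(\vect{y}^{1_Y}_k)$ together with $\varphi=\langle\varphi\rangle_\varphi$ and the identity $\langle p(\vect{y})\rangle_f=p(\langle\vect{y}\rangle_\varphi)$ from Lemma~\ref{lemma:jff8sdf} to collapse the median expression to the one with $\varphi(x_k)=\delta_f(x_k)$ (recall $\delta_f=\langle\varphi\rangle_p$ by Proposition~\ref{prop:ff56}, and since the median only sees $\varphi(x_k)$ truncated to $[\varphi(0)\wedge\varphi(1),\varphi(0)\vee\varphi(1)]$, which $\varphi(x_k)$ already lies in, $\langle\varphi\rangle_p$ and $\varphi$ agree inside the outer median). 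This yields \eqref{quasimedianDecomp}.

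For the converse, assume $f$ is quasi-median decomposable. The strategy is to produce the canonical factorization $f=p_f\circ\delta_f$ promised by Proposition~\ref{prop:DescrPP545664}, i.e.\ take $\varphi:=\delta_f$ and $p:=p_f$, the unique polynomial function extending $\hat f$ (Proposition~\ref{Goodstein}). First I would check that $\delta_f=\langle\delta_f\rangle_{\delta_f}$: setting $\vect{x}=(x,\dots,x)$ in \eqref{quasimedianDecomp} and iterating, or more directly observing that \eqref{quasimedianDecomp} with all coordinates equal forces $\delta_f(x)=\med(\delta_f(0),\delta_f(x),\delta_f(1))$ after peeling off coordinates one at a time — actually the clean way is: apply \eqref{quasimedianDecomp} repeatedly to reduce any $f(\vect{x})$ to a lattice expression in the values $\delta_f(x_1),\dots,\delta_f(x_n)$ and the constants $f(\vect{e})$, $\vect{e}\in\{0,1\}^n$; this exhibits $f$ as $g(\delta_f(x_1),\dots,\delta_f(x_n))$ for some function $g\colon Z^n\to Y$ where $Z=\mathcal R_{\delta_f}$, and tracking the formula shows $g$ is built from $\wedge,\vee$, projections, and constants, hence (once we know it is well-defined on all of $Y^n$) a polynomial function. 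The subtle point is that the repeated median decomposition produces a specific lattice term whose values on $\{0,1\}^n$ must be shown to equal $\hat f$, so that $g=p_f$.

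The main obstacle, I expect, is exactly this last bookkeeping step: verifying that the fully expanded median decomposition of $f$ agrees, on $\{0,1\}^n$-type arguments, with the order-preserving function $\hat f$ defined by the DNF expression \eqref{eq:hat}, rather than with one of the other $(2n)!$ rearrangements mentioned in Remark~\ref{remark-hat}. Concretely, I would expand $f(\vect{x})=\med(f(\vect{x}^0_k),\delta_f(x_k),f(\vect{x}^1_k))$ for $k=1,\dots,n$ in turn, using $\med(a,b,c)=(a\wedge c)\vee(b\wedge(a\vee c))$, and argue by induction on $n$ that the coefficient-type terms $f(\vect{e})$ assemble precisely into the nested $\bigvee\bigwedge$ of \eqref{eq:hat}; this is where one must be careful about the order in which $\vee$ and $\wedge$ are applied, since the median puts a $\wedge$ innermost and a $\vee$ outermost in its first argument but the reverse in its third. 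Once $g|_{\{0,1\}^n}=\hat f$ is established, $g$ is order-preserving on $\{0,1\}^n$ (as $\hat f$ always is) and extends uniquely to the polynomial function $p_f$ by Proposition~\ref{Goodstein}; combined with $f=p_f\circ\delta_f$ and the fact that $\delta_f=\langle\delta_f\rangle_{\delta_f}$, Definition~\ref{de:QP-QS} gives that $f$ is a quasi-polynomial function, completing the proof.
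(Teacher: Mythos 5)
Your two-directional strategy is the same as the paper's (median-decompose the outer polynomial for one implication; iterate (\ref{quasimedianDecomp}) for the other), but as written both halves have gaps. In the direction ``quasi-polynomial $\Rightarrow$ quasi-median decomposable'', the equalities you call ``the only thing to check'', namely $p(\varphi(\vect{x})^{0_Y}_k)=f(\vect{x}^{0}_{k})$ and $p(\varphi(\vect{x})^{1_Y}_k)=f(\vect{x}^{1}_{k})$, are false in general: for $n=1$ and $p$ the identity we have $f=\varphi$, and $p(\varphi(x)^{0_Y}_1)=0_Y$ while $f(x^{0}_{1})=\varphi(0_X)$. Moreover, since $\varphi$ is not assumed order-preserving, $f(\vect{x}^{0}_{k})$ and $f(\vect{x}^{1}_{k})$ need not be comparable, so one cannot argue by simply ``replacing the constants in the $k$th slot''; the inequality $p(\vect{y}^{0}_{k})\leqslant p(\vect{y}^{c}_{k})\leqslant p(\vect{y}^{1}_{k})$ does not address this. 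The paper's proof does the real work here: it writes $f=p\circ\delta_f$, median-decomposes $p$ at $\delta_f(\vect{x})$, wraps the whole identity in $\langle\cdot\rangle_f$ (using $f=\langle f\rangle_f$ and the commutation of $\med$ with $\langle\cdot\rangle_f$), and computes via Lemma~\ref{lemma:jff8sdf} that the wrapped outer terms equal $f(\vect{x}^{0}_{k})\wedge f(\vect{x}^{1}_{k})$ and $f(\vect{x}^{0}_{k})\vee f(\vect{x}^{1}_{k})$, finishing with $\med(a\wedge b,t,a\vee b)=\med(a,t,b)$. Your sketch names the right tools but does not perform this computation, and the truncation you invoke to replace $\varphi(x_k)$ by $\delta_f(x_k)$ uses the wrong interval: $\delta_f=\langle\varphi\rangle_p$ truncates $\varphi$ to $[p(\vect{0})\wedge p(\vect{1}),\, p(\vect{0})\vee p(\vect{1})]$, not to $[\varphi(0)\wedge\varphi(1),\varphi(0)\vee\varphi(1)]$.

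In the converse direction, the step you single out as ``the main obstacle'' is not needed, and as described it would fail. Iterating (\ref{quasimedianDecomp}) yields a lattice term in $\delta_f(x_1),\ldots,\delta_f(x_n)$ whose constants are the values $f(\vect{e})$; replacing each $\delta_f(x_i)$ by a variable $y_i$, this term defines a polynomial function $p\colon Y^n\to Y$ on all of $Y^n$ (there is no well-definedness issue), and $f=p\circ\delta_f$ by construction. Since Definition~\ref{de:QP-QS} only asks for \emph{some} factorization through \emph{some} polynomial function, nothing has to be compared with $\hat f$ or $p_f$; this is exactly the paper's one-sentence argument for sufficiency. Moreover, the purely syntactic matching you propose does not hold: for $n=2$, decomposing coordinate $1$ outermost gives the corner value $\big(f(0,0)\vee f(0,1)\big)\wedge\big(f(1,0)\vee f(1,1)\big)$ at $\vect{e}=(0,1)$, which agrees with $\hat f(0,1)=\big(f(0,0)\wedge f(1,0)\big)\vee\big(f(0,1)\wedge f(1,1)\big)$ only because of the relations that quasi-median decomposability forces on the corner values, not as a formal lattice identity. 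The item that genuinely needs an argument is the one you pass over quickly, namely $\delta_f=\langle\delta_f\rangle_{\delta_f}$: for $n\geqslant 2$ the diagonal expansion yields it only after one shows, by decomposing at $\{0,1\}^n$-points, that every $f(\vect{e})$ lies in $[f(\vect{0})\wedge f(\vect{1}),\, f(\vect{0})\vee f(\vect{1})]$ (a step the paper itself asserts without detail).
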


\begin{proof}
To verify that the condition is sufficient, just observe that applying (\ref{quasimedianDecomp}) repeatedly to each variable of $f$ we can
straightforwardly obtain a representation of $f$ as $f=p\circ\delta_f$ for some polynomial function $p$. Moreover, we have
$\delta_f=\langle\delta_f\rangle_{\delta_f}$.

Conversely, suppose that $f\colon X^{n}\rightarrow Y$ is a quasi-polynomial function. By Proposition~\ref{prop:ff56} and
Lemma~\ref{lemma:jff8sdf}, we have $f=\langle f\rangle_f$ and there exists a polynomial function $p\colon Y^n\to Y$ such that
$f=p\circ\delta_f$. By Theorem~\ref{mainChar}, for every $\vect{x}\in X^n$ and every $k\in [n]$, we have
$$
f(\vect{x}) = \langle f(\vect{x})\rangle_f = \big\langle \med\big(p(\delta_f(\vect{x})^{0}_{k}), \delta_f(x_k),
p(\delta_f(\vect{x})^{1}_{k})\big)\big\rangle_f\, ,
$$
that is, since $\med$ and $\langle\cdot\rangle_f$ commute,
\begin{equation}\label{12345}
f(\vect{x}) = \med\big(\langle p(\delta_f(\vect{x})^{0}_{k})\rangle_f, \delta_f(x_k),\langle p(\delta_f(\vect{x})^{1}_{k})\rangle_f\big).
\end{equation}
However, by using Lemma~\ref{lemma:jff8sdf} twice, we obtain
\begin{eqnarray*}
\langle p(\delta_f(\vect{x})^{0}_{k})\rangle_f &=& p(\langle\delta_f(\vect{x})_k^0\rangle_{\delta_f})%
~=~ p\big(\big\langle\delta_f(\vect{x})_k^{\delta_f(0)\wedge\delta_f(1)}\big\rangle_{\delta_f}\big)\\
&=& \big\langle p\big(\delta_f(\vect{x})_k^{\delta_f(0)\wedge\delta_f(1)}\big)\big\rangle_f
\end{eqnarray*}
and, since $p$ preserves $\vee$ and $\wedge$ componentwise, we also have
$$
\langle p(\delta_f(\vect{x})^{0}_{k})\rangle_f = \langle f(\vect{x}_k^0)\wedge f(\vect{x}_k^1)\rangle_f = f(\vect{x}_k^0)\wedge f(\vect{x}_k^1).
$$
Similarly, we have $\langle p(\delta_f(\vect{x})^{1}_{k})\rangle_f = f(\vect{x}_k^0)\vee f(\vect{x}_k^1)$. Combining this with (\ref{12345}), we
see that $f$ is quasi-median decomposable.
\end{proof}

We say that a function $f\colon X^{n}\rightarrow Y$ is \emph{quasi-$\vee$-homogeneous} (resp.\ \emph{quasi-$\wedge$-homogeneous}) if for every
$\vect{x}\in X^n$ and $c\in X$, we have
$$
f(\vect{x}\vee c) = f(\vect{x})\vee  \delta_f(c) \qquad \textrm{(resp.\ $f(\vect{x}\wedge c) = f(\vect{x})\wedge \delta_f(c)$}).
$$
Note that, for every quasi-$\vee$-homogeneous (resp.\ quasi-$\wedge$-homogeneous) function $f\colon X^{n}\rightarrow Y$, the diagonal section
$\delta_f$ preserves $\vee$ (resp.\ $\wedge$).

A function $f\colon X^{n}\rightarrow Y$ is said to be \emph{horizontally $\wedge$-decomposable} (resp.\ \emph{horizontally $\vee$-decomposable})
if for every $\vect{x}\in X^n$ and $c\in X$, we have
$$
f(\vect{x}) = f(\vect{x}\vee c)\wedge f([\vect{x}]^c) \qquad \textrm{(resp.\ $f(\vect{x}) = f(\vect{x}\wedge c)\vee f([\vect{x}]_c)$}).
$$

By considering functions $f\colon X^{n}\rightarrow Y$ whose diagonal section $\delta_f$ is a lattice homomorphism (i.e., $\delta_f$ preserves
both $\vee$ and $\wedge$), we can further extend some of the characterizations presented in \cite{CouMar3} to the present setting. Note that any
of these preservation conditions immediately ensures the order-preservation of $\delta_f$, in which case, by Fact~\ref{nondecreasing} and
Proposition~\ref{prop:ff56}, if $f$ is a quasi-polynomial function, then it is necessarily order-preserving.

\begin{theorem}\label{theorem:QuasiHomoMedian}
Let $f\colon X^{n}\rightarrow Y$ be an order-preserving function whose diagonal section $\delta_f$ is a lattice homomorphism. The following are
equivalent:
\begin{enumerate}
\item[$(i)$] $f$ is a quasi-polynomial function. \item[$(ii)$] $f$ is quasi-$\wedge$-homogeneous and quasi-$\vee$-homogeneous. \item[$(iii)$]
$f$ is quasi-$\wedge$-homogeneous and horizontally $\vee$-decomposable. \item[$(iv)$] $f$ is horizontally $\wedge$-decomposable and
quasi-$\vee$-homogeneous.
\end{enumerate}
\end{theorem}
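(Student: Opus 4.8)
The plan is to prove the chain of equivalences by first leveraging the already-established characterization (Theorem~\ref{theorem:QuasiMedian}) that quasi-polynomial functions are exactly the quasi-median decomposable ones, and then showing that under the hypothesis that $\delta_f$ is a lattice homomorphism and $f$ is order-preserving, each of the conditions $(ii)$, $(iii)$, $(iv)$ is equivalent to quasi-median decomposability. The implications $(i)\Rightarrow(ii)$, $(i)\Rightarrow(iii)$, $(i)\Rightarrow(iv)$ should follow quickly: if $f=p\circ\delta_f$ for a polynomial function $p\colon Y^n\to Y$, then since $\delta_f$ preserves $\vee$ we have $f(\vect{x}\vee c)=p(\delta_f(\vect{x})\vee\delta_f(c))$, and by Lemma~\ref{lemma:45474786} together with Proposition~\ref{prop:ff56} (which gives $\delta_f=\langle\varphi\rangle_p$, so $\langle\delta_f(c)\rangle_p=\delta_f(c)$ on the range) this equals $p(\delta_f(\vect{x}))\vee\delta_f(c)=f(\vect{x})\vee\delta_f(c)$; dually for quasi-$\wedge$-homogeneity. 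For horizontal $\vee$-decomposability, I would write $f(\vect{x})=p(\delta_f(\vect{x}))$ and use the polynomial DNF representation (\ref{eq:DNF45}) for $p$, splitting each product $\bigwedge_{i\in I}\delta_f(x_i)$ according to whether all $i\in I$ satisfy $x_i\geqslant c$ or not, and matching against $f(\vect{x}\wedge c)\vee f([\vect{x}]_c)$; the key point is that $\delta_f$ being a homomorphism lets the splitting at the level of $X$ transfer to a splitting at the level of $Y$.

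The real content is in the reverse implications $(ii)\Rightarrow(i)$, $(iii)\Rightarrow(i)$, $(iv)\Rightarrow(i)$, and the cleanest route is to show each implies quasi-median decomposability (\ref{quasimedianDecomp}) and invoke Theorem~\ref{theorem:QuasiMedian}. For $(ii)$: fix $\vect{x}$ and $k$, and note $x_k = (x_k\vee 0_X)\wedge\cdots$ is not quite right, so instead I would argue that $\vect{x}=\bigl(\vect{x}^{0}_{k}\vee x_k\bigr)\wedge\vect{x}^{1}_k$ when $x_k$ is in position $k$ — more precisely, componentwise one checks $x_i = (x_i\vee 0)\wedge x_i$ for $i\ne k$ and $x_k=(0\vee x_k)\wedge 1$ for $i=k$, so that $\vect{x}=(\vect{x}^0_k\vee x_k)\wedge\vect{x}^1_k$. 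Then quasi-$\vee$-homogeneity applied with $c=x_k$ gives $f(\vect{x}^0_k\vee x_k)=f(\vect{x}^0_k)\vee\delta_f(x_k)$, and quasi-$\wedge$-homogeneity applied with $c$ equal to the constant vector... here one must be careful since the homogeneity conditions are stated only for constant shifts $\vect{x}\wedge c$, not arbitrary meets. This is the main obstacle: translating the single-variable median decomposition into the language of scalar $\vee$- and $\wedge$-shifts. I expect the resolution mirrors the argument in \cite{CouMar3} for the chain case: one shows $f(\vect{x})=\med(f(\vect{x}^0_k),\delta_f(x_k),f(\vect{x}^1_k))$ by expanding the median as $(f(\vect{x}^0_k)\vee\delta_f(x_k))\wedge(f(\vect{x}^1_k)\vee\delta_f(x_k))\wedge(f(\vect{x}^0_k)\vee f(\vect{x}^1_k))$ — using that $\delta_f$ is order-preserving so $\delta_f(0)\leqslant\delta_f(x_k)\leqslant\delta_f(1)$ and $f(\vect{x}^0_k)\leqslant f(\vect{x})\leqslant f(\vect{x}^1_k)$ by order-preservation of $f$ — and matching each term via the homogeneity hypotheses evaluated at $c=x_k$ after reducing to the case where $\vect{x}^0_k$ and $\vect{x}^1_k$ differ from $\vect{x}$ only in coordinate $k$.

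For $(iii)\Rightarrow(i)$ and $(iv)\Rightarrow(i)$ I would follow the same template but one of the two scalar identities is replaced by a horizontal decomposition identity; the trick is that horizontal $\vee$-decomposability with the threshold $c=x_k$ yields $f(\vect{x})=f(\vect{x}\wedge x_k)\vee f([\vect{x}]_{x_k})$, and iterating or combining with quasi-$\wedge$-homogeneity recovers the median formula in coordinate $k$. Here the hypothesis that $\delta_f$ is a lattice homomorphism is essential for the horizontal conditions to interact correctly with the diagonal section, and order-preservation of $f$ guarantees the various sub-expressions sit in the right order so that the median collapses as desired. By symmetry (swapping $\wedge\leftrightarrow\vee$, $0\leftrightarrow1$), the proof of $(iv)\Rightarrow(i)$ is dual to that of $(iii)\Rightarrow(i)$. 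Once all three reverse implications are in hand, combining with Theorem~\ref{theorem:QuasiMedian} closes the cycle $(i)\Leftrightarrow(ii)$, $(i)\Leftrightarrow(iii)$, $(i)\Leftrightarrow(iv)$, which is exactly the claimed equivalence.
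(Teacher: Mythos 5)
Your overall architecture---reduce everything to quasi-median decomposability and invoke Theorem~\ref{theorem:QuasiMedian}---is the same as the paper's, and your forward step $(i)\Rightarrow(ii)$ is sound (the paper instead takes $p$ to be a Sugeno integral via Proposition~\ref{prop:Sugeno4565}, so $\wedge_Y$- and $\vee_Y$-homogeneity are available outright, but your route through Lemma~\ref{lemma:45474786} also works). The genuine gap is in the reverse implications, which is where all the content lies. For $(ii)\Rightarrow(i)$ you correctly identify the obstacle (the homogeneity hypotheses only cover meets and joins with a \emph{constant}, so $f$ cannot be pushed through the factorization $\vect{x}=(\vect{x}^0_k\vee x_k)\wedge\vect{x}^1_k$), but you never overcome it: ``matching each term via the homogeneity hypotheses'' after expanding the median is not an argument, and deferring to ``the resolution mirrors \cite{CouMar3}'' leaves the key step unproved. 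The missing observation is short but essential: with the \emph{scalar} $c=x_k$ one has $\vect{x}\wedge x_k=\vect{x}^1_k\wedge x_k$ and $\vect{x}\vee x_k=\vect{x}^0_k\vee x_k$, so quasi-homogeneity gives $f(\vect{x}\wedge x_k)=f(\vect{x}^1_k)\wedge\delta_f(x_k)$ and $f(\vect{x}\vee x_k)=f(\vect{x}^0_k)\vee\delta_f(x_k)$; since $\vect{x}\wedge x_k\leqslant\vect{x}\leqslant\vect{x}\vee x_k$ and $f$ is order-preserving, $f(\vect{x})$ is squeezed between $f(\vect{x}^0_k)\vee\bigl(f(\vect{x}^1_k)\wedge\delta_f(x_k)\bigr)$ and $\bigl(f(\vect{x}^0_k)\vee\delta_f(x_k)\bigr)\wedge f(\vect{x}^1_k)$, which coincide with $\med\bigl(f(\vect{x}^0_k),\delta_f(x_k),f(\vect{x}^1_k)\bigr)$ by distributivity and $f(\vect{x}^0_k)\leqslant f(\vect{x}^1_k)$. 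The same omission affects $(iii)\Rightarrow(i)$: ``iterating or combining with quasi-$\wedge$-homogeneity recovers the median formula'' skips the two facts that do the work, namely $f(\vect{x}\wedge x_k)=f(\vect{x}^1_k)\wedge\delta_f(x_k)$ and $f([\vect{x}]_{x_k})\leqslant f(\vect{x}^0_k)$ (the $k$th component of $[\vect{x}]_{x_k}$ is $0$), after which a short distributivity computation yields the median.

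Note also that the paper sidesteps exactly the implications you found hardest by proving the cycle $(i)\Rightarrow(ii)\Rightarrow(iii)\Rightarrow(i)$ and obtaining $(iv)$ by duality: $(ii)\Rightarrow(iii)$ is a three-line computation, $f(\vect{x}\wedge c)\vee f([\vect{x}]_c)=(f(\vect{x})\wedge\delta_f(c))\vee f([\vect{x}]_c)=f(\vect{x})\wedge f([\vect{x}]_c\vee c)=f(\vect{x})$ using $[\vect{x}]_c\vee c\geqslant\vect{x}$, so no direct $(i)\Rightarrow(iii)$ or $(ii)\Rightarrow(i)$ is ever needed. Your proposed DNF proof of horizontal $\vee$-decomposability can be repaired, but the splitting criterion must be ``some $i\in I$ with $x_i\leqslant c$'' (the condition defining $[\vect{x}]_c$), not ``all $x_i\geqslant c$''; in a general distributive lattice components may be incomparable with $c$, and the argument goes through only because $x_{i_0}\leqslant c$ for some $i_0\in I$ forces $\bigwedge_{i\in I}\delta_f(x_i)\leqslant\delta_f(c)$ ($\delta_f$ preserves meets). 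As written, the proposal does not yet constitute a complete proof of the reverse implications.
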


\begin{proof}
We show that $(i)\Rightarrow (ii)\Rightarrow(iii)\Rightarrow (i)$. The equivalence $(i)\Leftrightarrow (iv)$ follows dually.

So suppose that $f\colon X^{n}\rightarrow Y$ is a quasi-polynomial function whose diagonal section $\delta_f$ is a lattice homomorphism. By
Propositions~\ref{prop:ff56} and \ref{prop:Sugeno4565}, we may assume that $f=p\circ \delta_f$ for some Sugeno integral $p\colon Y^n\to Y$.
Since $p$ is $\wedge_Y$-homogeneous, we have for every $c\in X$,
\begin{eqnarray*}
f(\vect{x}\wedge c) &=& p(\delta_f(\vect{x}\wedge c)) ~=~ p(\delta_f(\vect{x})\wedge \delta_f(c)) ~=~ p(\delta_f(\vect{x}))\wedge \delta_f(c)\\
&=& f(\vect{x}) \wedge \delta_f(c),
\end{eqnarray*}
which shows that $f$ is quasi-$\wedge$-homogeneous. Dually, it can be shown that $f$ is quasi-$\vee$-homogeneous.

To see that $(ii)\Rightarrow (iii)$, suppose that $f\colon X^{n}\rightarrow Y$ satisfies $(ii)$. Since $f$ is order-preserving, for every  $c\in
X$, we have
\begin{eqnarray*}
f(\vect{x}\wedge c) \vee f([\vect{x}]_{c})
&=&  (f(\vect{x}) \wedge \delta_f(c)) \vee f([\vect{x}]_{c}) ~=~  f(\vect{x}) \wedge  (\delta_f(c) \vee f([\vect{x}]_{c}) )\\
&=&  f(\vect{x}) \wedge  f([\vect{x}]_{c} \vee c ) ~=~  f(\vect{x}),
\end{eqnarray*}
thus showing that $(iii)$ holds.

To show that $(iii)\Rightarrow (i)$, by Theorem~\ref{theorem:QuasiMedian}, it is enough to show that $(iii)$ implies that $f$ is quasi-median
decomposable. Let $\vect{x}\in X^n$ and take $k\in [n]$. Since $f$ is horizontally $\vee$-decomposable, we have
$$
f(\vect{x})=f(\vect{x}\wedge x_k)\vee f([\vect{x}]_{x_k}).
$$
By quasi-$\wedge$-homogeneity, we have
$$
f(\vect{x}\wedge x_k)=f(\vect{x}_k^{1}\wedge x_k)=f(\vect{x}_k^{1})\wedge \delta_f(x_k)
$$
and by the definition of $[\vect{x}]_{x_k}$, we have $f([\vect{x}]_{x_k})\leqslant f(\vect{x}_k^{0})$. Thus,
\begin{eqnarray*}
f(\vect{x}) &=& \big(f(\vect{x}_k^{0})\vee f(\vect{x})\big)\wedge f(\vect{x}_k^{1})%
~=~ \big(f(\vect{x}_k^{0})\vee (f(\vect{x}_k^{1})\wedge \delta_f(x_k))\big)\wedge f(\vect{x}_k^{1})\\
&=& f(\vect{x}_k^{0}) \vee \big(f(\vect{x}_k^{1})\wedge \delta_f(x_k)\big)%
~=~ \med \big(f(\vect{x}_k^{0}),\delta_f(x_k),f(\vect{x}_k^{1})\big),
\end{eqnarray*}
which completes the proof of the theorem.
\end{proof}

Let $Y$ be a bounded chain. We say that a function $f\colon X^{n}\rightarrow Y$ is \emph{quasi-comonotonic minitive} (resp.
\emph{quasi-comonotonic maxitive}) if
 for every permutation $\sigma$ on $[n]$ and every $\vect{x},\vect{x}'\in X^n$ such that $\delta_f (\vect{x}),\delta_f (\vect{x}') \in Y^n_\sigma$, we have
$$
f(\vect{x}\wedge \vect{x}') = f(\vect{x})\wedge f(\vect{x}')  \quad \mbox{(resp. $ f(\vect{x}\vee \vect{x}') = f(\vect{x})\vee f(\vect{x}')).$}
$$

By Theorem~\ref{theorem:WLP-comonot}, it follows that every quasi-polynomial function whose diagonal section $\delta_f$ is a lattice
homomorphism is quasi-comonotonic minitive and maxitive. Moreover, it is easy to verify that if a function is quasi-comonotonic minitive (resp.\
quasi-comonotonic maxitive), then it is quasi-$\wedge$-homogeneous (resp.\ quasi-$\vee$-homogeneous). Thus, using
Theorem~\ref{theorem:QuasiHomoMedian} $(ii)$, we obtain the following characterization of quasi-polynomial functions valued on bounded chains.


\begin{theorem}\label{theorem:QLP-comonot}
Let $X$ be a bounded distributive lattice, $Y$ a bounded chain, and $f\colon X^n\rightarrow Y$ an order-preserving function whose diagonal
section $\delta_f$ is a lattice homomorphism. Then $f$ is a quasi-polynomial function if and only if it is quasi-comonotonic minitive and
quasi-comonotonic maxitive.
\end{theorem}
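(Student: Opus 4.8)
The plan is to establish both directions by reducing to results already available: Theorem~\ref{theorem:WLP-comonot} (comonotonic characterization of polynomial functions on chains) for necessity, and the equivalence $(i)\Leftrightarrow(ii)$ of Theorem~\ref{theorem:QuasiHomoMedian} for sufficiency. Both reductions hinge on the hypothesis that $\delta_f$ is a lattice homomorphism.

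For necessity, suppose $f$ is a quasi-polynomial function. By Proposition~\ref{prop:ff56} we may write $f=p\circ\delta_f$ for some polynomial function $p\colon Y^n\to Y$. Since $Y$ is a bounded chain, Theorem~\ref{theorem:WLP-comonot} shows $p$ is comonotonic minitive and comonotonic maxitive. Now fix a permutation $\sigma$ on $[n]$ and $\vect{x},\vect{x}'\in X^n$ with $\delta_f(\vect{x}),\delta_f(\vect{x}')\in Y^n_\sigma$. Applying the homomorphism $\delta_f$ componentwise gives $\delta_f(\vect{x}\wedge\vect{x}')=\delta_f(\vect{x})\wedge\delta_f(\vect{x}')$, whence
\[
f(\vect{x}\wedge\vect{x}')=p\big(\delta_f(\vect{x})\wedge\delta_f(\vect{x}')\big)=p(\delta_f(\vect{x}))\wedge p(\delta_f(\vect{x}'))=f(\vect{x})\wedge f(\vect{x}'),
\]
the middle equality by comonotonic minitivity of $p$, since $\delta_f(\vect{x}),\delta_f(\vect{x}')\in Y^n_\sigma$. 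Thus $f$ is quasi-comonotonic minitive, and the dual computation gives quasi-comonotonic maxitivity.

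For sufficiency, suppose $f$ is quasi-comonotonic minitive and quasi-comonotonic maxitive. As $f$ is order-preserving and $\delta_f$ is a lattice homomorphism, Theorem~\ref{theorem:QuasiHomoMedian}~$(ii)\Rightarrow(i)$ reduces the problem to showing $f$ is quasi-$\wedge$-homogeneous and quasi-$\vee$-homogeneous. For the former, fix $\vect{x}\in X^n$ and $c\in X$ and set $\vect{c}=(c,\ldots,c)$. Since $Y$ is a chain, there is a permutation $\sigma$ on $[n]$ with $\delta_f(\vect{x})\in Y^n_\sigma$; and the constant tuple $\delta_f(\vect{c})=(\delta_f(c),\ldots,\delta_f(c))$ lies in $Y^n_\sigma$ for every $\sigma$. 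Hence quasi-comonotonic minitivity applies to $\vect{x}$ and $\vect{c}$, yielding
\[
f(\vect{x}\wedge c)=f(\vect{x}\wedge\vect{c})=f(\vect{x})\wedge f(\vect{c})=f(\vect{x})\wedge\delta_f(c),
\]
so $f$ is quasi-$\wedge$-homogeneous; the dual argument using quasi-comonotonic maxitivity gives quasi-$\vee$-homogeneity.

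I do not expect a genuine obstacle here; the statement is essentially a chain-specific packaging of earlier results. The two points needing care are exactly the ones above: in necessity, using that $\delta_f$ is a homomorphism to transfer comonotonicity measured through $\delta_f$ into honest comonotonicity of $\delta_f(\vect{x}),\delta_f(\vect{x}')$ in $Y^n$; and in sufficiency, the constant-tuple specialization that converts quasi-comonotonic minitivity/maxitivity into quasi-homogeneity. Once these are in place, Theorems~\ref{theorem:WLP-comonot} and \ref{theorem:QuasiHomoMedian} do the rest.
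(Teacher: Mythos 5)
Your proof is correct and follows essentially the same route as the paper: necessity via $f=p\circ\delta_f$ and the comonotonic minitivity/maxitivity of polynomial functions on the chain $Y$ (Theorem~\ref{theorem:WLP-comonot}), and sufficiency by specializing to constant tuples to get quasi-$\wedge$- and quasi-$\vee$-homogeneity and then invoking Theorem~\ref{theorem:QuasiHomoMedian}~$(ii)\Rightarrow(i)$. You merely spell out the details the paper leaves as ``easy to verify.''
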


\begin{remark}
Theorems~\ref{theorem:QuasiMedian}, \ref{theorem:QuasiHomoMedian}, and \ref{theorem:QLP-comonot} extend to the present setting some results
obtained in \cite{CouMar3} when $X=Y$ is a bounded chain. Order-preserving quasi-polynomial functions are also characterized in the latter case
by order-preservation and horizontally $\wedge$- and $\vee$-decompositions (see \cite[Theorem 11]{CouMar3}). It remains open whether this result
still holds in the present general setting.
\end{remark}

We now use Theorem~\ref{theorem:QuasiHomoMedian} to derive further characterizations of the class of polynomial functions. We first recall the
following lemma given in \cite{CouMar0}.

\begin{lemma}\label{lemma:IdEq653}
A unary function $f\colon X\to X$ is a polynomial function if and only if $f$ is a solution of the equation $f\circ f=f$, is a lattice
homomorphism, and has a convex range.
\end{lemma}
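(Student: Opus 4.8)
The plan is to prove both directions separately, and in each case to exploit the structure theory of polynomial functions in a single variable. For the necessity direction, suppose $f\colon X\to X$ is a (unary) polynomial function. By the DNF representation (\ref{eq:DNF45}) with $n=1$, we have $f(x) = \beta \vee (\alpha \wedge x)$ for suitable constants $\alpha=f(1)$ and $\beta=f(0)$, where necessarily $\beta\leqslant\alpha$ after absorbing. From this explicit form one checks directly that $f$ preserves $\wedge$ and $\vee$ (using distributivity of $X$), i.e., $f$ is a lattice homomorphism; that $\mathcal{R}_f=[\beta,\alpha]$ is convex; and that $f\circ f = f$, either by direct substitution into $\beta\vee(\alpha\wedge x)$ or, more conceptually, by invoking Fact~\ref{nondecreasing}: since $f$ is a polynomial function with range $\mathcal{R}_f$, we have $\delta_f(c)=c$ for all $c\in\mathcal{R}_f$, and as $f(x)\in\mathcal{R}_f$ for every $x$, this gives $f(f(x))=f(x)$.

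For the sufficiency direction, assume $f\circ f=f$, that $f$ is a lattice homomorphism, and that $\mathcal{R}_f$ is convex. By Proposition~\ref{Goodstein}, to show $f$ is a polynomial function it suffices to exhibit it as the unique polynomial extension of an order-preserving map on $\{0,1\}$; equivalently, by the $n=1$ case of the normal form, it suffices to show $f(x)=f(0)\vee(f(1)\wedge x)$ for all $x\in X$. First note that $f$ is order-preserving, being a lattice homomorphism. Write $a=f(0)$ and $b=f(1)$, so $a\leqslant b$ and $[a,b]\subseteq\mathcal{R}_f$ by convexity (indeed $\mathcal{R}_f=[a,b]$ since $\mathcal{R}_f$ is convex and contained in $[a,b]$ by monotonicity). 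Now fix $x\in X$ and set $p(x)=a\vee(b\wedge x)$; we must show $f(x)=p(x)$. Apply $f$, which is a homomorphism fixing every point of its range (by $f\circ f=f$, every $y\in\mathcal{R}_f$ satisfies $f(y)=y$): then
\begin{equation*}
f(p(x)) = f\big(a\vee(b\wedge x)\big) = f(a)\vee\big(f(b)\wedge f(x)\big) = a\vee\big(b\wedge f(x)\big) = f(x),
\end{equation*}
where the last equality uses $a\leqslant f(x)\leqslant b$. So $f(x)=f(p(x))$. It remains to deduce $f(x)=p(x)$ itself, not merely $f(x)=f(p(x))$; for this observe that $p(x)\in[a,b]=\mathcal{R}_f$, so $f(p(x))=p(x)$, and therefore $f(x)=p(x)$ as desired.

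The main obstacle, and the point requiring care, is the last step of the sufficiency argument: the three hypotheses each do genuine work and one must be sure the chain of deductions uses exactly that $f$ fixes its range pointwise (from idempotence), that $f$ commutes with $\wedge$ and $\vee$ (to push $f$ inside $p(x)$), and that the range is convex (to conclude $p(x)\in\mathcal{R}_f$ and hence is a fixed point). Dropping convexity, for instance, allows $f$ with $f(0)\wedge f(1)<f(\text{something})$ outside $[f(0),f(1)]$; dropping the homomorphism hypothesis allows non-monotone idempotents; dropping idempotence allows, e.g., constant multiples in a non-idempotent guise. It is worth recording explicitly that the argument also establishes the stronger fact $f=p_f$ with $p_f(x)=f(0)\vee(f(1)\wedge x)$, which is the concrete normal form one would use in the applications that follow.
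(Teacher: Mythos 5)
Your proof is correct: both directions are sound, and the crux of the sufficiency step—pushing $f$ through $p(x)=f(0)\vee(f(1)\wedge x)$ using the homomorphism property and the fact that idempotence makes $f$ fix its range pointwise, then using convexity to conclude $p(x)\in\mathcal{R}_f$ and hence $f(p(x))=p(x)$—is exactly what is needed. Note that the paper itself offers no proof of Lemma~\ref{lemma:IdEq653} (it is merely recalled from \cite{CouMar0}), and your argument via the unary normal form $f(x)=f(0)\vee(f(1)\wedge x)$ is essentially the standard one behind the cited result.
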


The following proposition provides conditions under which a quasi-polynomial function is a polynomial function.

\begin{proposition}\label{prop:5353452}
Let $f\colon X^n\to X$ be a quasi-polynomial function. Then $f$ is a polynomial function if and only if it is $\mathcal{R}_f$-idempotent and
$\delta_f$ is a lattice homomorphism with a convex range.
\end{proposition}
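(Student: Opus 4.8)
The plan is to prove both implications of Proposition~\ref{prop:5353452}, with the forward direction being essentially immediate and the converse requiring us to manufacture a genuine polynomial factorization.

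\textbf{Necessity.} Suppose $f\colon X^n\to X$ is a polynomial function. By Fact~\ref{nondecreasing}, $\delta_f(c)=c$ for every $c\in\mathcal{R}_f$, so $f$ is $\mathcal{R}_f$-idempotent. Moreover, $\delta_f\colon X\to X$ is itself a unary polynomial function (it is the composition of the polynomial function $f$ with the diagonal embedding $x\mapsto(x,\dots,x)$), hence by Lemma~\ref{lemma:IdEq653} it is a lattice homomorphism with convex range.

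\textbf{Sufficiency.} Assume $f$ is a quasi-polynomial function that is $\mathcal{R}_f$-idempotent and whose diagonal section $\delta_f$ is a lattice homomorphism with convex range. By Theorem~\ref{theorem:QuasiMedian}, $f$ is quasi-median decomposable, and by Propositions~\ref{prop:ff56} and~\ref{prop:DescrPP545664} we may write $f=p_f\circ\delta_f$ where $p_f\colon X^n\to X$ is the (unique) polynomial function extending $\hat f$. The strategy is to show that, under the stated hypotheses, $\delta_f$ is a polynomial function; then $f=p_f\circ\delta_f$ is a composition of polynomial functions and hence itself polynomial. The key observation is that $\delta_f$ being a lattice homomorphism with convex range leaves only the idempotency condition $\delta_f\circ\delta_f=\delta_f$ of Lemma~\ref{lemma:IdEq653} to be verified. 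To this end I would compute $\delta_f(\delta_f(x))$: for $x\in X$, $\delta_f(x)=f(x,\dots,x)\in\mathcal{R}_f$, and by $\mathcal{R}_f$-idempotency $\delta_f(c)=c$ for every $c\in\mathcal{R}_f$; taking $c=\delta_f(x)$ gives $\delta_f(\delta_f(x))=\delta_f(x)$. Thus $\delta_f\circ\delta_f=\delta_f$, and Lemma~\ref{lemma:IdEq653} yields that $\delta_f$ is a unary polynomial function. Since $f=p_f\circ\delta_f$ with both $p_f$ and $\delta_f$ polynomial functions on $X$, the composition $f$ is a polynomial function (a composition of lattice-operation/projection/constant combinations is again such a combination).

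The main subtlety is making sure the factorization $f=p_f\circ\delta_f$ is legitimately at our disposal with $p_f$ a polynomial function on $X^n$ (not merely on $Y^n$ for some different $Y$); here this is automatic since the codomain is $X$ itself, so Proposition~\ref{prop:DescrPP545664} applies directly with $Y=X$. The only genuine content is recognizing that $\mathcal{R}_f$-idempotency is exactly what upgrades the lattice-homomorphism-with-convex-range hypothesis on $\delta_f$ to the full hypothesis list of Lemma~\ref{lemma:IdEq653}, via the elementary identity $\delta_f(\delta_f(x))=\delta_f(x)$; everything else is bookkeeping.
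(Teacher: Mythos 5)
Your proof is correct and takes essentially the same route as the paper: $\mathcal{R}_f$-idempotency gives $\delta_f\circ\delta_f=\delta_f$, Lemma~\ref{lemma:IdEq653} then shows $\delta_f$ is a polynomial function, and the factorization $f=p_f\circ\delta_f$ coming from Proposition~\ref{prop:ff56} (resp.\ Proposition~\ref{prop:DescrPP545664}) finishes the sufficiency, while the necessity is the same standard observation the paper delegates to \cite{CouMar0}. The invocation of Theorem~\ref{theorem:QuasiMedian} is superfluous but harmless.
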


\begin{proof}
The necessity is straightforward (see \cite{CouMar0}). Let us prove the sufficiency. Since $f$ is $\mathcal{R}_f$-idempotent, we have
$\delta_f\circ\delta_f=\delta_f$ and, by Lemma~\ref{lemma:IdEq653}, $\delta_f$ is a polynomial function. Since $f$ is a quasi-polynomial
function, by Proposition~\ref{prop:ff56}, $f$ is a polynomial function.
\end{proof}

By combining Theorem~\ref{theorem:QuasiHomoMedian} with Proposition~\ref{prop:5353452}, we obtain the following characterizations of the class
of polynomial functions.

\begin{corollary}
Let $f\colon X^n\to X$ be an order-preserving and $\mathcal{R}_f$-idempotent function whose diagonal section $\delta_f$ is a lattice
homomorphism with a convex range. The following are equivalent:
\begin{enumerate}
\item[$(i)$] $f$ is a polynomial function. \item[$(ii)$] $f$ is quasi-$\wedge$-homogeneous and quasi-$\vee$-homogeneous. \item[$(iii)$] $f$ is
quasi-$\wedge$-homogeneous and horizontally $\vee$-decomposable. \item[$(iv)$] $f$ is horizontally $\wedge$-decomposable and
quasi-$\vee$-homogeneous.
\end{enumerate}
\end{corollary}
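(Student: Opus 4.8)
The plan is to obtain the corollary by splicing together Theorem~\ref{theorem:QuasiHomoMedian} and Proposition~\ref{prop:5353452}, after checking that the hypotheses listed here are exactly what those two results require. Note first that the standing assumptions of the corollary (namely, $f$ order-preserving and $\delta_f$ a lattice homomorphism) already include the hypotheses of Theorem~\ref{theorem:QuasiHomoMedian} (with $Y=X$). Hence that theorem applies verbatim and shows that conditions $(ii)$, $(iii)$, and $(iv)$ are each equivalent to the single assertion
$$(\ast)\colon\quad f \text{ is a quasi-polynomial function.}$$
So it only remains to prove that, under the full list of hypotheses of the corollary, conditions $(i)$ and $(\ast)$ are equivalent.

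The implication $(i)\Rightarrow(\ast)$ is immediate: every polynomial function $p\colon X^n\to X$ is a quasi-polynomial function, as witnessed by the factorization $p=p\circ\mathrm{id}_X$, the identity map satisfying $\langle x\rangle_{\mathrm{id}_X}=\med(0_X,x,1_X)=x$, that is, $\mathrm{id}_X=\langle\mathrm{id}_X\rangle_{\mathrm{id}_X}$. (Alternatively, one may take $\varphi=\delta_p$ and invoke Fact~\ref{nondecreasing} and Proposition~\ref{prop:ff56}.) For the converse $(\ast)\Rightarrow(i)$, one simply feeds $f$ into Proposition~\ref{prop:5353452}: by assumption $f$ is a quasi-polynomial function which is $\mathcal{R}_f$-idempotent and whose diagonal section $\delta_f$ is a lattice homomorphism with convex range, so that proposition yields that $f$ is a polynomial function.

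Putting the pieces together, we get $(i)\Leftrightarrow(\ast)\Leftrightarrow(ii)$, and likewise $(\ast)\Leftrightarrow(iii)$ and $(\ast)\Leftrightarrow(iv)$, which gives the full cycle of equivalences. There is no genuine obstacle in this argument; the only points requiring attention are the bookkeeping of hypotheses — making sure that $\mathcal{R}_f$-idempotency and the convexity of the range of $\delta_f$ are invoked only where Proposition~\ref{prop:5353452} needs them, and that the trivial inclusion ``polynomial $\Rightarrow$ quasi-polynomial'' is justified by exhibiting a legitimate inner map. Everything else is a direct citation of the two results above.
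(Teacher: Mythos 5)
Your proposal is correct and follows exactly the route the paper intends: the corollary is stated as a direct combination of Theorem~\ref{theorem:QuasiHomoMedian} (with $Y=X$, giving the equivalence of $(ii)$--$(iv)$ with ``$f$ is a quasi-polynomial function'') and Proposition~\ref{prop:5353452} (linking that to $(i)$ under the extra hypotheses). The only detail the paper leaves implicit, the trivial inclusion ``polynomial $\Rightarrow$ quasi-polynomial,'' you justify properly via a legitimate inner map.
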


\section{Transformed polynomial functions and Sugeno integrals}

We have defined quasi-polynomial functions by considering polynomial functions whose variables are first transformed by a certain unary
function. Instead of transforming the variables, we could transform the polynomial function itself. This leads to the following definition.

\begin{definition}\label{de:Tr-PF}
We say that a function $f\colon X^n\to Y$ is a \emph{transformed polynomial function} (resp.\ \emph{a transformed Sugeno integral}) if there
exist a polynomial function (resp.\ a Sugeno integral) $p\colon X^n\to X$ and a function $\psi\colon X\to Y$ such that $f=\psi\circ p$, that is,
\begin{equation}\label{eq:Tr-pol7434}
f(x_1,\ldots,x_n)=\psi(p(x_1,\ldots,x_n)).
\end{equation}
\end{definition}


The functions $\psi$ and $p$ in (\ref{eq:Tr-pol7434}) need not be unique. For instance, if $f$ is a constant $c\in Y$, then we could choose
$\psi\equiv c$ and $p$ arbitrarily. However, if $c\in X$, then we could as well choose $p\equiv c$ and $\psi$ arbitrarily except $\psi(c)=c$.
The following result shows that we can always choose $\delta_f$ for $\psi$.

\begin{proposition}\label{prop:64585}
Let $f\colon X^n\to Y$ be a transformed polynomial function as described in (\ref{eq:Tr-pol7434}). Then, we have $f=\delta_{f}\circ p$. In
particular, if $p$ is a Sugeno integral, then we have $\psi=\delta_{f}$.
\end{proposition}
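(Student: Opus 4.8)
The plan is to show that $\psi$ agrees with $\delta_f$ on the range $\mathcal{R}_p$ of the polynomial function $p$, which suffices since $f=\psi\circ p$ only ever evaluates $\psi$ at points of $\mathcal{R}_p$. First I would compute the diagonal section: for $c\in X$ we have $\delta_f(c)=f(c,\ldots,c)=\psi(p(c,\ldots,c))=\psi(\delta_p(c))$. By Fact~\ref{nondecreasing}, $\delta_p(c)=\langle c\rangle_p$, and in particular $\delta_p(c)=c$ whenever $c\in\mathcal{R}_p$. Hence for every $c\in\mathcal{R}_p$ we get $\delta_f(c)=\psi(c)$, i.e.\ $\psi$ and $\delta_f$ coincide on $\mathcal{R}_p$.

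Next I would use this to conclude $f=\delta_f\circ p$. For any $\vect{x}\in X^n$, the value $p(\vect{x})$ lies in $\mathcal{R}_p$, so by the previous step $\delta_f(p(\vect{x}))=\psi(p(\vect{x}))=f(\vect{x})$. This gives the first assertion $f=\delta_f\circ p$ directly, with no case analysis needed.

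For the ``in particular'' clause, suppose $p$ is a Sugeno integral, so that $\mathcal{R}_p=X$ by definition. Then the coincidence of $\psi$ and $\delta_f$ on $\mathcal{R}_p$ established above is a coincidence on all of $X$, i.e.\ $\psi=\delta_f$ as functions $X\to Y$.

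There is essentially no obstacle here: the argument is a two-line unwinding of the definition of $\delta_f$ together with the identity $\delta_p=\langle\cdot\rangle_p$ from Fact~\ref{nondecreasing} and the observation that $\psi$ is only probed on $\mathcal{R}_p$. The only point requiring the mildest care is remembering that $\delta_p(c)=c$ holds on $\mathcal{R}_p$ rather than on all of $X$ (since $p$ need not be idempotent in general), but this is exactly what Fact~\ref{nondecreasing} provides, and it is all that is used.
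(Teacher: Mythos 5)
Your argument is correct and is essentially the paper's own proof, just written pointwise: the identity $\delta_f=\psi\circ\delta_p$ together with the $\mathcal{R}_p$-idempotency of $p$ (Fact~\ref{nondecreasing}) gives $\delta_f\circ p=\psi\circ\delta_p\circ p=\psi\circ p=f$, and the Sugeno case follows since then $\mathcal{R}_p=X$ (equivalently $\delta_p=\mathrm{id}_X$). No gaps.
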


\begin{proof}
Since any polynomial function $p\colon X^n\to X$ is $\mathcal{R}_p$-idempotent, we have $\delta_f\circ p=\psi\circ\delta_p\circ p=\psi\circ
p=f$. The second part follows immediately since then $\delta_p=\mathrm{id}_X$ is the identity function on $X$.
\end{proof}

We also have the following result, which is the counterpart of Proposition~\ref{prop:Sugeno4565}.

\begin{proposition}\label{prop:Sug223}
A function $f\colon X^n\to Y$ is a transformed polynomial function if and only if it is a transformed Sugeno integral.
\end{proposition}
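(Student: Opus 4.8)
The plan is to mirror the structure of the proof of Proposition~\ref{prop:Sugeno4565}, exploiting the representation of an arbitrary polynomial function $p\colon X^n\to X$ as $p=\langle q\rangle_p$ for some Sugeno integral $q\colon X^n\to X$. Since every Sugeno integral is in particular a polynomial function, the implication "transformed Sugeno integral $\Rightarrow$ transformed polynomial function" is immediate, so the whole content is in the converse. So suppose $f=\psi\circ p$ for some polynomial function $p\colon X^n\to X$ and some $\psi\colon X\to Y$. Write $p=\langle q\rangle_p=\med(p(\vect 0),q,p(\vect 1))$ for a suitable Sugeno integral $q\colon X^n\to X$, and the goal is to produce some $\psi'\colon X\to Y$ with $f=\psi'\circ q$.

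First I would observe that for every $\vect x\in X^n$ we have $p(\vect x)=\med(p(\vect 0),q(\vect x),p(\vect 1))$, which lies in the interval $[\,p(\vect 0)\wedge p(\vect 1),\,p(\vect 0)\vee p(\vect 1)\,]$, and conversely every element of that interval is of the form $\med(p(\vect 0),c,p(\vect 1))$ for some $c$ in the range of $q$ (indeed $q$ being a Sugeno integral has range $X$, so every $c\in X$ occurs). Thus the natural candidate is $\psi'\colon X\to Y$ defined by $\psi'(c)=\psi(\med(p(\vect 0),c,p(\vect 1)))=\psi(\langle c\rangle_p)$. Then for every $\vect x\in X^n$,
\[
\psi'(q(\vect x))=\psi\big(\med(p(\vect 0),q(\vect x),p(\vect 1))\big)=\psi(\langle q(\vect x)\rangle_p)=\psi\big((\langle q\rangle_p)(\vect x)\big)=\psi(p(\vect x))=f(\vect x),
\]
where the third equality uses that $\med$ is applied componentwise and $\langle q\rangle_p=\med(p(\vect 0),q,p(\vect 1))$ evaluated at $\vect x$ is exactly $\med(p(\vect 0),q(\vect x),p(\vect 1))$. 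Hence $f=\psi'\circ q$, exhibiting $f$ as a transformed Sugeno integral.

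The only point that needs a little care — and the main (mild) obstacle — is verifying that $q$ can be chosen to be a genuine \emph{Sugeno integral} (range equal to all of $X$, equivalently $q(\vect 0)=0$ and $q(\vect 1)=1$), which is precisely the content of the remark recalled just after Proposition~\ref{Goodstein}: every polynomial function $p\colon X^n\to X$ is of the form $\langle q\rangle_p$ for a suitable Sugeno integral $q$. Once that is in hand, the computation above is routine: it is just the componentwise action of $\med$ together with the definition of $\langle\,\cdot\,\rangle_p$ and the identity $\langle c\rangle_p=\med(p(\vect 0),c,p(\vect 1))$ from Fact~\ref{nondecreasing}. (One could alternatively phrase the whole argument via $q\circ\langle\psi'\rangle_{\text{?}}$ as in Proposition~\ref{prop:Sugeno4565}, but going through $\psi'=\psi\circ\langle\cdot\rangle_p$ directly is cleaner in the transformed setting since the unary map is now on the outside.)
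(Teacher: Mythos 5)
Your proposal is correct and follows essentially the same route as the paper: both write $p=\langle q\rangle_p$ for a Sugeno integral $q$ and compose the outer map with $\langle\cdot\rangle_p=\delta_p$. Your $\psi'=\psi\circ\langle\cdot\rangle_p$ is exactly the paper's $\psi\circ\delta_p=\delta_f$, so the two arguments coincide up to notation.
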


\begin{proof}
Clearly, any transformed Sugeno integral is a transformed polynomial function. Conversely, let $f\colon X^n\to Y$ be a transformed polynomial
function as described in (\ref{eq:Tr-pol7434}). Let $q\colon X^n\to X$ be a Sugeno integral such that $p=\langle q\rangle_p=\delta_p\circ q$
(cf.\ Fact~\ref{nondecreasing}). It follows that $f=\psi\circ p=\psi\circ\delta_p\circ q=\delta_f\circ q$ is a transformed Sugeno integral.
\end{proof}

Denote the domain of a function $f$ by $\mathcal{D}_f$. Recall that a function $h$ is a \emph{right-inverse} \cite[p.~25]{AlsFraSch06} of a
function $f$ if $\mathcal{D}_h=\mathcal{R}_f$, $\mathcal{R}_h\subseteq\mathcal{D}_f$, and $f\circ h=\mathrm{id}_{\mathcal{R}_f}$. It can be
shown that the statement ``every function has at least one right-inverse'' is equivalent to the axiom of choice. Even though, in general, we
have to appeal to the axiom of choice in order to ensure the existence of right-inverses, this requirement is not necessary in many concrete
situations, for instance, when dealing with monotone functions over the real numbers.

We say that $f$ is \emph{quasi-idempotent} if $\mathcal{R}_{\delta_f}=\mathcal{R}_f$. The terminology ``quasi-idempotent'' is justified by the
following result (see \cite{Mar} for the real case).

\begin{proposition}\label{prop:QI00}
Let $f\colon X^n\to Y$ be a function. Under the axiom of choice, $f$ is quasi-idempotent if and only if there is an idempotent function $g\colon
X^n\to X$ and a function $\psi\colon \mathcal{R}_g\to Y$ such that $f=\psi\circ g$. In this case, $\psi=\delta_f$.
\end{proposition}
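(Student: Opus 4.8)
The plan is to prove both directions of the equivalence in Proposition~\ref{prop:QI00}, with the forward direction being where the axiom of choice enters.

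\textbf{Sufficiency.} Suppose $f=\psi\circ g$ where $g\colon X^n\to X$ is idempotent and $\psi\colon\mathcal{R}_g\to Y$. I would first compute $\delta_f$: for $x\in X$ we have $\delta_f(x)=f(x,\ldots,x)=\psi(g(x,\ldots,x))=\psi(\delta_g(x))=\psi(x)$, using idempotence of $g$ (i.e.\ $\delta_g=\mathrm{id}_X$). In particular $\psi=\delta_f$ on $\mathcal{R}_g$, and since $g$ is idempotent $\mathcal{R}_g=X$, so indeed $\psi=\delta_f$. It remains to check $\mathcal{R}_{\delta_f}=\mathcal{R}_f$. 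The inclusion $\mathcal{R}_{\delta_f}\subseteq\mathcal{R}_f$ is trivial. Conversely, any value $f(\vect{x})=\psi(g(\vect{x}))$; writing $c=g(\vect{x})\in X$, idempotence gives $g(c,\ldots,c)=c$, hence $f(c,\ldots,c)=\psi(c)=\psi(g(\vect{x}))=f(\vect{x})$, so $f(\vect{x})=\delta_f(c)\in\mathcal{R}_{\delta_f}$. Thus $f$ is quasi-idempotent. (No choice needed here.)

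\textbf{Necessity.} Suppose $f$ is quasi-idempotent, i.e.\ $\mathcal{R}_{\delta_f}=\mathcal{R}_f$. I would invoke the axiom of choice to pick a right-inverse $h$ of $\delta_f\colon X\to Y$ restricted to its range, that is, $h\colon\mathcal{R}_{\delta_f}\to X$ with $\delta_f\circ h=\mathrm{id}_{\mathcal{R}_{\delta_f}}$; concretely, for each $y\in\mathcal{R}_{\delta_f}$ choose some $h(y)\in X$ with $\delta_f(h(y))=y$. Now define $g\colon X^n\to X$ by $g=h\circ f$ (this is well-defined since $\mathcal{R}_f=\mathcal{R}_{\delta_f}=\mathcal{D}_h$). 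I claim $g$ is idempotent: for $x\in X$, $\delta_g(x)=g(x,\ldots,x)=h(f(x,\ldots,x))=h(\delta_f(x))$; since $\delta_f(x)\in\mathcal{R}_{\delta_f}$ and $\delta_f\circ h=\mathrm{id}$, I want to conclude $h(\delta_f(x))=x$ — but this need not hold, since $h$ is only a right-inverse, not a left-inverse. The main obstacle is exactly this: one cannot in general arrange $g$ to be idempotent by naively composing with a right-inverse.

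To circumvent this, I would instead argue as follows. Set $\psi=\delta_f\colon X\to Y$ and define $g=h\circ f\colon X^n\to X$ as above. Then $\psi\circ g=\delta_f\circ h\circ f=\mathrm{id}_{\mathcal{R}_f}\circ f=f$, which gives the desired factorization $f=\psi\circ g$ with $\psi=\delta_f$ (restricted to $\mathcal{R}_g$, which we must still show equals a set on which this makes sense). For idempotence of $g$: compute $\delta_g(x)=h(\delta_f(x))$, and then $g(\delta_g(x),\ldots,\delta_g(x))=h(\delta_f(h(\delta_f(x))))=h(\delta_f(x))=\delta_g(x)$, using $\delta_f\circ h=\mathrm{id}$. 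So $\delta_g\circ\delta_g=\delta_g$, i.e.\ $g$ is $\mathcal{R}_g$-idempotent, but not necessarily fully idempotent. Here the fix is to replace $X$ by the idempotent core: restrict attention to $X':=\mathcal{R}_g=h(\mathcal{R}_f)$. On $X'$, $\delta_g$ acts as the identity (since $\delta_g(h(y))=h(\delta_f(h(y)))=h(y)$ for $y\in\mathcal{R}_f$), so $g|_{(X')^n}\colon (X')^n\to X'$ is genuinely idempotent; one checks $g$ maps $(X')^n$ into $X'$ because $\mathcal{R}_g=X'$. Then $f=\psi\circ g$ with $g\colon (X')^n\to X'$ idempotent and $\psi=\delta_f|_{X'}=\delta_f|_{\mathcal{R}_g}$. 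This matches the statement, which only asks for $\psi\colon\mathcal{R}_g\to Y$. Finally, the identity $\psi=\delta_f$ on $\mathcal{R}_g$ holds since for $y=h(z)\in\mathcal{R}_g$ (with $z\in\mathcal{R}_f$) we have $\psi(y)=\delta_f(h(z))=z$, consistent with the sufficiency computation. The hard part throughout is the bookkeeping to get genuine idempotence rather than mere $\mathcal{R}_g$-idempotence, resolved by passing to the range of $g$ as the new domain lattice.
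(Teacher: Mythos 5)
Your sufficiency argument is correct and is essentially the paper's. The necessity direction, however, has a genuine gap. The proposition asks for an \emph{idempotent} function $g\colon X^n\to X$ (i.e., $\delta_g=\mathrm{id}_X$ on all of $X$) together with $\psi\colon\mathcal{R}_g\to Y$ such that $f=\psi\circ g$ \emph{as functions on $X^n$}. You correctly notice that $g=h\circ f$ need not be idempotent, but your fix does not deliver what is required: if you keep $g=h\circ f$ defined on all of $X^n$, it is only $\mathcal{R}_g$-idempotent ($\delta_g\circ\delta_g=\delta_g$), not idempotent; if instead you restrict $g$ to $(X')^n$ with $X'=\mathcal{R}_g=h(\mathcal{R}_f)$, then $\psi\circ g$ is defined only on $(X')^n$ and cannot equal $f$, whose domain is $X^n$. ``Passing to the range of $g$ as the new domain lattice'' changes the function being factorized, so the stated equivalence is not established.

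The missing idea is a one-line modification of $g$ on the diagonal, which is exactly what the paper does: with $h\colon\mathcal{R}_{\delta_f}\to X$ a right-inverse of $\delta_f$, define $g(\vect{x})=x_1$ if $x_1=\cdots=x_n$, and $g(\vect{x})=(h\circ f)(\vect{x})$ otherwise. This $g$ is genuinely idempotent by construction; it is well defined because quasi-idempotence gives $\mathcal{R}_f=\mathcal{R}_{\delta_f}=\mathcal{D}_h$ (a point you did note); and $f=\delta_f\circ g$ holds: on the diagonal, $\delta_f(g(x,\ldots,x))=\delta_f(x)=f(x,\ldots,x)$, while off the diagonal, $\delta_f\big(h(f(\vect{x}))\big)=f(\vect{x})$ since $\delta_f\circ h=\mathrm{id}_{\mathcal{R}_{\delta_f}}$. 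Since $g$ is idempotent, $\mathcal{R}_g=X$, so $\psi=\delta_f$ as claimed, and the diagonal computation in your sufficiency part then confirms the final assertion.
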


\begin{proof}
Sufficiency is straightforward. We have $\mathcal{R}_{\delta_f}=\mathcal{R}_\psi=\mathcal{R}_{\psi\circ g}=\mathcal{R}_f$. For the necessity we
observe that, if $h\colon\mathcal{R}_{\delta_f}\to X$ is a right-inverse of $\delta_f$, then the function $g\colon X^n\to X$, defined by
$g(\vect{x})=x_1$ if $x_1=\cdots =x_n$ and $g(\vect{x})=(h\circ f)(\vect{x})$ otherwise, is idempotent and satisfies $f=\delta_f\circ g$.
\end{proof}

The following theorem yields a characterization of transformed polynomial functions whose diagonal sections are lattice homomorphisms. These
functions are clearly order-preserving by Fact~\ref{nondecreasing} and Proposition~\ref{prop:64585}.

\begin{theorem}\label{thm:QPfunctions}
Let $f\colon X^n\to Y$ be a function and assume that $\delta_f$ is a lattice homomorphism. Then, under the axiom of choice, $f$ is a transformed
polynomial function if and only if it is a quasi-idempotent quasi-polynomial function.
\end{theorem}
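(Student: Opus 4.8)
The plan is to prove both implications directly, using the structural characterizations already available. For the necessity, suppose $f\colon X^n\to Y$ is a transformed polynomial function. By Proposition~\ref{prop:64585} we may write $f=\delta_f\circ p$ for a polynomial function $p\colon X^n\to X$; in fact, by Proposition~\ref{prop:Sug223}, we may take $p$ to be a Sugeno integral $q$, so that $\delta_p=\mathrm{id}_X$ and $\mathcal{R}_q=X$. Then $\mathcal{R}_{\delta_f}=\mathcal{R}_{\delta_f\circ\mathrm{id}_X}=\mathcal{R}_{\delta_f\circ\delta_q\circ q}=\mathcal{R}_{\delta_f\circ q}=\mathcal{R}_f$, so $f$ is quasi-idempotent. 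To see that $f$ is a quasi-polynomial function, I would exhibit a factorization $f=p'\circ\delta_f$ with $p'$ a polynomial function on $Y^n$ (and check $\delta_f=\langle\delta_f\rangle_{\delta_f}$, which is automatic since $\delta_f$, being a lattice homomorphism, is monotone). The natural candidate is $p'=p_f$, the unique polynomial function on $Y^n$ extending $\hat f$ (Proposition~\ref{Goodstein}). Since $\delta_f$ is a lattice homomorphism and $q$ is a Sugeno integral, composing $\delta_f$ with the DNF representation (\ref{eq:DNF45}) of $q$ and pushing $\delta_f$ through the $\wedge$'s and $\vee$'s shows that $f=\delta_f\circ q$ coincides with the polynomial function on $Y^n$ obtained by replacing each coefficient $c=q(\vect e_I)$ by $\delta_f(c)=\delta_f(q(\vect e_I))=f(\vect e_I)$; this latter function is exactly $p_f$ because its restriction to $\{0,1\}^n$ agrees with $\hat f$ (here one uses that $\delta_f$ being a homomorphism makes $\delta_f\circ q$ order-preserving, so its restriction to $\{0,1\}^n$ is $\widehat{\delta_f\circ q}=\hat f$). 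Hence $f=p_f\circ\delta_f$ is a quasi-polynomial function.

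For the sufficiency, assume $f$ is a quasi-idempotent quasi-polynomial function with $\delta_f$ a lattice homomorphism. Since $f$ is quasi-idempotent, $\mathcal{R}_{\delta_f}=\mathcal{R}_f$, and invoking the axiom of choice we pick a right-inverse $h\colon\mathcal{R}_f\to X$ of $\delta_f$, so $\delta_f\circ h=\mathrm{id}_{\mathcal{R}_f}$. By Proposition~\ref{prop:DescrPP545664} we have $f=p_f\circ\delta_f$ for the polynomial function $p_f\colon Y^n\to Y$. The idea is to define a polynomial function $r\colon X^n\to X$ on the domain lattice by "pulling back" $p_f$ through $h$, and then recover $f$ as $\delta_f\circ r$. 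Concretely, using the DNF of $p_f$, namely $p_f(\vect y)=\bigvee_{I\subseteq[n]}\big(\alpha_{p_f}(I)\wedge\bigwedge_{i\in I}y_i\big)$ with $\alpha_{p_f}(I)=p_f(\vect e_I)=f(\vect e_I)\in\mathcal{R}_f$, I define $r\colon X^n\to X$ by $r(\vect x)=\bigvee_{I\subseteq[n]}\big(h(f(\vect e_I))\wedge\bigwedge_{i\in I}x_i\big)$, which is a polynomial function on $X$ since each coefficient $h(f(\vect e_I))$ lies in $X$. Applying the lattice homomorphism $\delta_f$ to $r(\delta_f^{-1}\text{-style expression})$ — more precisely, since $\delta_f$ preserves both $\wedge$ and $\vee$ and $\delta_f(h(f(\vect e_I)))=f(\vect e_I)$ — one gets $\delta_f(r(\vect x))=\bigvee_I\big(f(\vect e_I)\wedge\bigwedge_i \delta_f(x_i)\big)=p_f(\delta_f(\vect x))=f(\vect x)$. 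Thus $f=\delta_f\circ r$ with $r$ a polynomial function on $X^n$, so $f$ is a transformed polynomial function.

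The main obstacle is the bookkeeping in the sufficiency direction: one must be careful that the coefficients $h(f(\vect e_I))$ genuinely lie in $X$ (they do, since $h$ maps into $X$) and, crucially, that $\delta_f$ applied to the DNF of $r$ reproduces exactly $p_f\circ\delta_f$. The second point hinges on $\delta_f$ being a \emph{full} lattice homomorphism (preserving both operations), which is precisely the hypothesis; if $\delta_f$ only preserved one operation the argument would break, because one cannot push $\delta_f$ simultaneously through the outer $\bigvee$ and the inner $\bigwedge$ of the normal form. An alternative, perhaps cleaner, route to sufficiency avoiding explicit normal forms: set $g=h\circ f\colon X^n\to X$ away from the diagonal and $g(\vect x)=x_1$ on the diagonal as in the proof of Proposition~\ref{prop:QI00}, giving an idempotent $g$ with $f=\delta_f\circ g$; then argue that $g$ inherits enough structure (median decomposability, via Theorem~\ref{mainChar} and the fact that $f$ is quasi-median decomposable by Theorem~\ref{theorem:QuasiMedian}) to be a polynomial function. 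I would present the normal-form argument as the primary one since it is self-contained and makes the role of the homomorphism hypothesis transparent.
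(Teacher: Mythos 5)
Your proposal is correct and follows essentially the same route as the paper's proof: for sufficiency you construct exactly the polynomial function $p_{h\circ f}$ (your $r$, with coefficients $h(f(\vect{e}_I))$ obtained from a right-inverse $h$ of $\delta_f$) and push the homomorphism $\delta_f$ through its DNF to get $\delta_f\circ r=p_f\circ\delta_f=f$, while for necessity you obtain $f=p_f\circ\delta_f$ by pushing $\delta_f$ through the DNF of the inner polynomial, just as in the paper. The only (harmless) deviation is that you establish quasi-idempotency by a direct range computation via the Sugeno-integral factorization rather than by citing Propositions~\ref{prop:Sug223} and \ref{prop:QI00}, and you spell out explicitly the verification $\hat f=f|_{\{0,1\}^n}$ and $\delta_f=\langle\delta_f\rangle_{\delta_f}$ that the paper leaves implicit.
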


\begin{proof}
Suppose $f\colon X^n\to Y$ is a quasi-idempotent quasi-polynomial function. Let $h\colon\mathcal{R}_{\delta_f}\to X$ be a right-inverse of
$\delta_f$, i.e., $\delta_f\circ h=\mathrm{id}_{\mathcal{R}_{\delta_f}}$, and define $\psi=\delta_f$ and $p=p_{h\circ f}$, where $p_{h\circ f}$
is the unique polynomial function extending $h\circ f|_{\{0,1\}^n}$ (see Proposition~\ref{Goodstein}), that is, 
$$
p_{h\circ f}(\mathbf{x})=\bigvee_{I\subseteq [n]}\Big((h\circ f)(\mathbf{e}_I)\wedge\bigwedge_{i\in I}x_i\Big).
$$
Since $\delta_f$ preserves $\vee$ and $\wedge$, we have $\psi\circ p=\delta_f\circ p_{h\circ f}=p_f\circ\delta_f=f$, which shows that $f$ is a
transformed polynomial function.

Conversely, suppose $f\colon X^n\to Y$ is a transformed polynomial function. By Propositions~\ref{prop:Sug223} and \ref{prop:QI00}, $f$ is
quasi-idempotent. Moreover, by Proposition~\ref{prop:64585}, we have $f=\delta_f\circ p=p_{\delta_f\circ p}\circ\delta_f=p_f\circ\delta_f$,
which shows that $f$ is a quasi-polynomial function.
\end{proof}

\begin{remark}
\begin{enumerate}
\item[(i)] We have seen in the proof of Theorem~\ref{thm:QPfunctions} that a transformed polynomial function $f\colon X^n\to Y$ whose $\delta_f$
is a lattice homomorphism satisfies the equations $f=p_f\circ\delta_f=\delta_f\circ p_{h\circ f}$ for any right-inverse $h$ of $\delta_f$. In a
sense, the transformation and polynomial function commute since $p_{h\circ f}$ has just the same $\vee$-$\wedge$ form as $p_f$.

\item[(ii)] Quasi-idempotency is necessary in Theorem~\ref{thm:QPfunctions}. Indeed, the real quasi-polynomial function $f\colon [0,1]^2\to
[0,1]$, defined by $f=p\circ\varphi$, where $\varphi(x)=1$ if $x\in [\frac 12,1]$ and $0$ otherwise, and $p(x_1,x_2)=\mathrm{med}(x_1\wedge
x_2,\frac 12,x_1\vee x_2)$, is not quasi-idempotent since $f(1,0)=f(0,1)=\frac 12\notin \mathcal{R}_{\delta_f}$.

\item[(iii)] By combining Theorem~\ref{thm:QPfunctions} with existing characterizations of quasi-polyno\-mial functions (such as those presented
in Section 4), we immediately generate characterizations of transformed polynomial functions.
\end{enumerate}
\end{remark}

By combining Proposition~\ref{prop:5353452} and Theorem~\ref{thm:QPfunctions}, we obtain the following result which yields conditions under
which a transformed polynomial function is a polynomial function. Observe that, in Theorem~\ref{thm:QPfunctions}, the appeal to the axiom of
choice is used only to show that the conditions are sufficient.

\begin{proposition}\label{prop:535345452}
Let $f\colon X^n\to X$ be a transformed polynomial function. Then $f$ is a polynomial function if and only if it is $\mathcal{R}_f$-idempotent
and $\delta_f$ is a lattice homomorphism with a convex range.
\end{proposition}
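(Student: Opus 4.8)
The plan is to mimic the structure of Proposition~\ref{prop:5353452}, replacing the appeal to Proposition~\ref{prop:ff56} (quasi-polynomial factorization through the diagonal section) by the corresponding fact for transformed polynomial functions, namely Proposition~\ref{prop:64585}. The necessity is the easy direction: if $f$ is a polynomial function, then $f$ is $\mathcal{R}_f$-idempotent by Fact~\ref{nondecreasing}, and $\delta_f$ is a polynomial function in one variable, hence a lattice homomorphism with convex range by Lemma~\ref{lemma:IdEq653}; this is exactly the content of \cite{CouMar0} and requires no new argument.

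For the sufficiency, I would argue as follows. Assume $f\colon X^n\to X$ is a transformed polynomial function that is $\mathcal{R}_f$-idempotent and whose diagonal section $\delta_f$ is a lattice homomorphism with convex range. First I would observe that $\mathcal{R}_f$-idempotency gives $\delta_f\circ\delta_f=\delta_f$: indeed, for $c\in X$, the value $\delta_f(c)$ lies in $\mathcal{R}_f$, so $\delta_f(\delta_f(c))=\delta_f(c)$ by $\mathcal{R}_f$-idempotency applied to the element $\delta_f(c)\in\mathcal{R}_f$. Combining $\delta_f\circ\delta_f=\delta_f$, the lattice-homomorphism property, and the convex range of $\delta_f$, Lemma~\ref{lemma:IdEq653} tells us that $\delta_f$ is a (unary) polynomial function. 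Now invoke Proposition~\ref{prop:64585}: since $f$ is a transformed polynomial function, we may write $f=\delta_f\circ p$ for some polynomial function $p\colon X^n\to X$. But a composition of a unary polynomial function with an $n$-ary polynomial function is again a polynomial function (polynomial functions are closed under composition, as they are term functions built from $\wedge$, $\vee$, projections, and constants). Hence $f$ is a polynomial function, completing the proof.

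I do not anticipate a genuine obstacle here; the statement is essentially a transcription of Proposition~\ref{prop:5353452} with the "quasi" factorization replaced by the "transformed" factorization, and both ingredients (Lemma~\ref{lemma:IdEq653} and Proposition~\ref{prop:64585}) are already available. The one point deserving a line of care is the deduction of $\delta_f\circ\delta_f=\delta_f$ from $\mathcal{R}_f$-idempotency (one must note that $\mathcal{R}_{\delta_f}\subseteq\mathcal{R}_f$, so idempotency on $\mathcal{R}_f$ forces idempotency of $\delta_f$ on its own range, which is exactly $\delta_f\circ\delta_f=\delta_f$); after that, everything is a direct citation. It is also worth remarking, as the excerpt already notes, that unlike Theorem~\ref{thm:QPfunctions} this proposition needs no appeal to the axiom of choice, since the sufficiency direction here routes through Proposition~\ref{prop:64585} rather than through the construction of a right-inverse.
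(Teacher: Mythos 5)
Your proof is correct, but it takes a more direct route than the paper. The paper obtains this proposition with no separate argument, simply by combining Theorem~\ref{thm:QPfunctions} with Proposition~\ref{prop:5353452}: since $\delta_f$ is a lattice homomorphism, the (choice-free) necessity direction of Theorem~\ref{thm:QPfunctions} shows that the transformed polynomial function $f$ is a quasi-polynomial function, and Proposition~\ref{prop:5353452} then yields the conclusion; the remark about the axiom of choice in the paper serves exactly to certify that this detour through Theorem~\ref{thm:QPfunctions} does not import choice. You instead bypass the quasi-polynomial machinery entirely: from $\mathcal{R}_f$-idempotency you correctly extract $\delta_f\circ\delta_f=\delta_f$ (via $\mathcal{R}_{\delta_f}\subseteq\mathcal{R}_f$), apply Lemma~\ref{lemma:IdEq653} to get that $\delta_f$ is a unary polynomial function, use Proposition~\ref{prop:64585} to write $f=\delta_f\circ p$ with $p$ polynomial, and conclude by closure of polynomial functions under composition (immediate from the definition). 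This is essentially the proof of Proposition~\ref{prop:5353452} transposed to the transformed setting, with Proposition~\ref{prop:64585} playing the role of Proposition~\ref{prop:ff56}. What the paper's route buys is brevity given results already proved; what yours buys is self-containedness and a transparent verification that no appeal to the axiom of choice is needed, since you never construct a right-inverse. Both arguments are sound, and your necessity direction matches the paper's.
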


\section{Concluding remarks and future work}

In this paper we considered quasi-polynomial functions as mappings defined and valued on, possibly different, bounded distributive lattices, and
not necessarily order-preserving. The relevance of this concept in fields such as decision making was made apparent by showing that monotone
quasi-polynomial functions coincide exactly with those overall preference functionals which can be factorized into Sugeno integrals applied to a
utility function. We provided several axiomatizations for this class of quasi-polynomial functions, which subsume those presented in
\cite{CouMar3}, and explicitly described all possible factorizations of a given quasi-polynomial function as a composition of a polynomial
function with a unary map. Moreover, we introduced the notion of transformed polynomial function as a natural counterpart of quasi-polynomial
function, and characterized the class of transformed polynomial functions accordingly. As it turned out, under the axiom of choice, those
transformed polynomial functions whose diagonal section is a lattice homomorphism constitute a proper subclass of quasi-polynomial functions.

Looking at natural extensions to this framework, we are inevitably drawn to consider the multi-sorted setting. More precisely, we are interested
in mappings $f\colon X_1\times \cdots \times X_n\to Y$ which can be factorized as a composition
$$
f(x_1,\ldots, x_n)=p(\varphi_1(x_1),\ldots, \varphi_n(x_n))
$$
where $p\colon Y^n\to Y$ is a polynomial function, and each $\varphi_i\colon X_i\to Y$ is a unary map defined and valued on, possibly different,
bounded distributive lattices $X_i$ and $Y$. These functions appear naturally within the scope of multicriteria decision making (see for
instance Bouyssou et al.~\cite{BouMarPir09}), and their axiomatization constitutes a topic of future research.

\end{document}